\documentclass[12pt,a4paper,reqno]{amsart}

\usepackage{amssymb,amsmath,graphicx}
\usepackage{fancyvrb}
\usepackage{booktabs}
\usepackage{pgf}
\usepackage{tikz}
\usetikzlibrary{fit}
\newcommand\addvmargin[1]{%
  \node[fit=(current bounding box),inner ysep=#1,inner xsep=0]{};
}
\usepackage{longtable}
\usepackage{enumerate}
\usepackage{comment}
\usetikzlibrary{arrows,automata}
\usetikzlibrary{decorations.pathreplacing}

\newtheorem{theorem}{Theorem}[section]
\newtheorem{lemma}[theorem]{Lemma}
\newtheorem{corollary}[theorem]{Corollary}
\newtheorem{proposition}[theorem]{Proposition}

\theoremstyle{definition}
\newtheorem{definition}[theorem]{Definition}
\newtheorem{example}[theorem]{Example}

\newtheorem{remark}[theorem]{Remark}

\newtheorem{notation}[theorem]{Notation}

\newcommand{\Jonq}{Jonqui\`eres}
\newcommand{\BB}{\mathcal{B}}
\newcommand{\PP}{\mathbb{P}}
\newcommand{\K}{\mathbb{K}}

\newcommand{\infnear}{\succ}
\newcommand{\prox}{\dasharrow}

\newcommand{\satel}{\odot}
\newcommand{\notsatel}{\hbox{$\mspace{5mu}\not\mspace{-5mu}\odot\mspace{6mu}$}}

\DeclareMathOperator{\Aut}{Aut}

\DeclareMathOperator{\Bir}{Bir}
\DeclareMathOperator{\PGL}{PGL}

\DeclareMathOperator{\q}{ql}
\DeclareMathOperator{\lgth}{lgth}

\DeclareMathOperator{\id}{id}
\DeclareMathOperator{\mult}{mult}

\numberwithin{table}{section}
\numberwithin{figure}{section}

\title[On the quadratic length of plane Cremona maps of degree 4]{On the quadratic length of \\ plane Cremona maps of degree 4}
\author{Alberto Calabri$^{*}$}
\address{$^{*}$Dipartimento di Matematica e Informatica, Università di Ferrara, Via Machiavelli 30, 44121 Ferrara, Italy}
\email{clblrt@unife.it}

\author{Nguyen Thi Ngoc Giao$^{\dag \ddag}$}
\address{$^{\dag}$Department of Mathematics, Faculty of Science and Technology, Tokyo University of Science, 2641 Yamazaki, Noda, Chiba, 278-8510, Japan}
\address{$^{\ddag}$Faculty of Electrical Engineering, The University of Danang - University of Science and Technology, 54 Nguyen Luong Bang, Da Nang, Vietnam}
\email{ngocgiao185@gmail.com}

\begin{document}

\begin{abstract}
Every non-linear plane Cremona map can be decomposed into quadratic maps, and the minimum number of quadratic maps required is called its quadratic length. It is known that plane Cremona maps of degree 3 have quadratic length either 2 or 3.

In this paper, we study the quadratic length of plane Cremona maps $\varphi$ of degree 4.
Recall that either $\varphi$ is \emph{de Jonquières}, i.e.\ it has a base point of multiplicity 3, or it is not de Jonquières.
In the latter case, $\varphi$ has quadratic length 2, whereas in the former case we prove that $\varphi$ has quadratic length 3, 4, or 5, and we classify those maps that reach the upper bound.
\end{abstract}

\maketitle


\section{Introduction}

We work over an algebraically closed field $\K$ of characteristic zero.

We denote by $\PP^2$ the projective plane over $\K$ and by $\Bir(\PP^2)$ the \emph{plane Cremona group}, that is, the group of birational maps $\PP^2\dasharrow\PP^2$.
According to the celebrated Noether-Castelnuovo Theorem, any plane Cremona map $\varphi$ (of degree larger than one) can be written as the composition of quadratic maps.

Let us call the \emph{quadratic length} of $\varphi$ the minimum number of quadratic maps needed to decompose $\varphi$.
Recall that, in \cite{B-F}, Blanc and Furter defined and studied the \emph{length} of $\varphi$ as the minimum number of de \Jonq\ maps needed to decompose $\varphi$.

Let us say that two plane Cremona maps are \emph{equivalent} if one can be obtained from the other by changing projective coordinates in the source and/or the target plane.

A classification of equivalence classes of plane Cremona maps of degree 3 has been described only very recently; see \cite{C-D} and \cite{CN}.
In particular, in \cite{CN} we gave a fine classification of them and, as a byproduct, we proved the following theorem:

\begin{theorem}\label{thm3.1}
Let $\varphi$ be a plane Cremona map of degree 3.
Then, the quadratic length of $\varphi$ is either $2$ or $3$. In particular, $\varphi$ has quadratic length $3$ if and only if $\varphi$ is equivalent to the following map
\begin{equation}\label{phi1}
\PP^2 \dashrightarrow \PP^2,
\qquad
[x:y:z] \mapsto [xz^2 +y^3 : yz^2 : z^3].
\end{equation}
\end{theorem}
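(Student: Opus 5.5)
A cubic plane Cremona map $\varphi$ always has homaloidal type $(3;2,1^4)$: a double base point $p_0$ and four simple base points $p_1,\dots,p_4$, some of which may be infinitely near. The plan is to bound the quadratic length from below and above, and then single out the maps needing three quadratic maps.

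\textbf{Lower bound.} Since $\deg\varphi=3>2$, $\varphi$ is not quadratic, so its quadratic length is at least $2$.

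\textbf{Upper bound by one quadratic map.} Pick three base points among $p_0,p_1,\dots,p_4$ that form an admissible base locus for a quadratic map. This means they are not aligned, and that when they are infinitely near they are arranged consistently: each point is proper or in the first neighbourhood of another chosen point, with the admissible proximity relations. For a quadratic map $\sigma$ based at $p_0,p_i,p_j$, the Noether degree formula gives
\[
\deg(\varphi\circ\sigma^{-1}) = 2\cdot 3 - 2 - 1 - 1 = 2 .
\]
Hence $\varphi=(\varphi\circ\sigma^{-1})\circ\sigma$ is a composition of two quadratic maps. So the quadratic length is exactly $2$ whenever such a triple $\{p_0,p_i,p_j\}$ exists.

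\textbf{When no admissible triple exists.} The problem case is when every triple $\{p_0,p_i,p_j\}$ fails. Failure can happen in two ways:
\begin{itemize}
\item the three points lie on a line; or
\item the infinitely near structure makes them inadmissible, for instance $p_i$ infinitely near $p_0$ but $p_j$ in a higher-order neighbourhood, or satellite configurations.
\end{itemize}
Going through the possible proximity configurations (enriched Enriques diagrams) of $(3;2,1^4)$, this should happen only when all of $p_1,\dots,p_4$ form a chain infinitely near $p_0$ of a special kind: $p_1\infnear p_0$ and $p_2\infnear p_1$, with $p_2$ proximate to $p_0$ in a way that forces collinearity (or infinite nearness) obstructions.

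In that case one instead picks a quadratic map based at $p_0,p_1$ and one general point $q$. Then
\[
\deg(\varphi\circ\sigma^{-1}) = 6-2-1-0 = 3 ,
\]
so the degree stays $3$, but the new cubic has a less degenerate configuration that admits an admissible triple. This gives length at most $1+2=3$.

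\textbf{Identifying the extremal class.} One must also rule out length $2$ in the bad case. If $\varphi=\sigma_2\circ\sigma_1$, then the base points of $\sigma_1$ are three base points of $\varphi$. Their multiplicities must sum to $4$ by the degree formula, so they are $p_0,p_i,p_j$, which must form an admissible triple; this contradicts the assumption.

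The final step is to classify the exceptional configuration up to equivalence and show it gives exactly the map in \eqref{phi1}. I expect this case analysis of Enriques diagrams to be the main obstacle: exhibiting all degenerate proximity configurations and proving that they all collapse to a single equivalence class. Here I would use the explicit normal forms of the fine classification in \cite{CN}. In particular, the base points of $[xz^2+y^3:yz^2:z^3]$ lie entirely over $[1:0:0]$, in a chain with the double point at $[1:0:0]$, which matches the expected degenerate configuration.
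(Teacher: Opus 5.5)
Your skeleton is correct. By the degree formula, the first factor of any decomposition $\varphi=\sigma_2\circ\sigma_1$ must be based at $p_0$ and two simple points. So $\q(\varphi)=2$ exactly when some triple $\{p_0,p_i,p_j\}$ supports a quadratic map, and $\q(\varphi)\ge 3$ otherwise. The paper only quotes this statement from \cite{CN}, where it falls out of the complete classification of cubic maps. Your targeted criterion is the mechanism of Lemmas~\ref{lem:5.4} and~\ref{lem:5.5}, and it avoids classifying the good cases.

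However, the steps that carry the theorem are asserted rather than proved, and one of your explanations is wrong. Collinearity can never obstruct: a line through $p_0,p_i,p_j$ would meet a general member of the homaloidal net in at least $2+1+1=4>3$ points. The bad case comes instead from a proximity count:
\begin{enumerate}
\item $p_0$ is proper, by the proximity inequality.
\item If some $p_i$ is proper, case (1) or (2) of Lemma~\ref{involutory} applies.
\item If two simple points lie in the first neighbourhood of $p_0$, a base point over one of them cannot be proximate to $p_0$, since at most two simple points are. This gives case (3).
\end{enumerate}
What remains is the chain $p_4\infnear_1p_3\infnear_1p_2\infnear_1p_1\infnear_1p_0$ with $p_2\satel p_0$.

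Your bound $\q\le 3$ in that case rests on the unverified ``less degenerate configuration''. You must check what the involution $\psi$ based at $p_0,p_1,q$ does. The strict transform of $E_{p_0}$ is preserved, while $E_{p_1}$ becomes the strict transform of the line $\overline{p_0q}$. Hence $\bar\psi(p_2)$ is a second base point in the first neighbourhood of $p_0$, and $\bar\psi(p_3)$ is a non-satellite point over it. These three points give the admissible triple for $\varphi\circ\psi$.

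The most serious gap is the ``if and only if''. You need every cubic map with this configuration to be equivalent to \eqref{phi1}, and you defer this entirely to \cite{CN}. It is a short computation:
\begin{enumerate}
\item Put $p_0=[1:0:0]$ with $p_1$ in the direction $z=0$. The satellite point $p_2$ forces the tangent cone to be $z^2$, so the cubics through $p_0,p_1,p_2$ are $\kappa xz^2+c(y,z)$.
\item Blowing up along the cusp shows that $p_3$ and $p_4$ depend only on the coefficients $\mu\ne0$ and $\lambda$ of $y^3$ and $y^2z$ in $c$.
\item Hence the net is $\langle xz^2+\mu y^3+\lambda y^2z,\ yz^2,\ z^3\rangle$. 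The substitution $y\mapsto y-\frac{\lambda}{3\mu}z$ followed by a rescaling turns it into the net of \eqref{phi1}.
\item A Cremona map is determined by its net up to a target automorphism, which finishes the argument.
\end{enumerate}
For the converse, ``a chain over $[1:0:0]$'' does not by itself put \eqref{phi1} in the bad case, since a chain with $p_2\notsatel p_0$ has quadratic length $2$. You must observe that the general member of its net has an ordinary cusp at $[1:0:0]$.
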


Recall that plane Cremona maps of degree 4 either are de Jonquières or their homaloidal type is $(4;2^3,1^3)$.
In the former case, they have length 1, while in the latter case, they have length 2.
It is easy to see (cf.\ Lemma \ref{ql=2} below) that all plane Cremona maps of degree 4 and length 2 have quadratic length 2.

The main result of this paper is the following theorem:

\begin{theorem}\label{thm1.2}
Let $\varphi$ be a plane Cremona map of degree 4 and de Jonquières.
Then, the quadratic length of $\varphi$ is $3$, $4$ or $5$. In particular, $\varphi$ has quadratic length $5$ if and only if $\varphi$ is equivalent to one of the following two maps
\begin{equation}\label{phi1.1}
\PP^2 \dashrightarrow \PP^2,
\qquad
[x:y:z] \mapsto [xz^3+y^4+ky^2z^2 : yz^3 : z^4].
\end{equation}
where $k$ is either $0$ or $1$.
\end{theorem}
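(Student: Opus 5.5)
The plan is to prove the lower bound, the upper bound, and then the characterization of maps of quadratic length $5$, all by tracking how a composition of quadratic maps changes degrees and base points.

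\textbf{Lower bound.} A de Jonquières map $\varphi$ of degree 4 has homaloidal type $(4;3,1^6)$. Composing two quadratic maps gives a map of degree at most $4$. If the degree is exactly $4$, the two quadratic maps share no base point, so the composite has type $(4;2^3,1^3)$, which is not de Jonquières. Also $\varphi$ is not quadratic. Hence $\q(\varphi)\ge 3$.

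\textbf{Upper bound.} Let $p$ be the triple point of $\varphi$ and $q$ a simple base point of $\varphi$ that is proper or in the first neighbourhood of $p$; such a $q$ exists by the proximity inequalities. Choose a third point $r$ among the remaining base points so that $p,q,r$ support a quadratic map $\sigma$ (not collinear, and infinitely near only in the admissible ways). The degree formula gives
\[
\deg(\varphi\circ\sigma^{-1}) = 2\cdot 4 - (3+1+1) = 3,
\]
so $\varphi\circ\sigma^{-1}$ is a cubic map. By Theorem~\ref{thm3.1} it has quadratic length at most $3$, so $\q(\varphi)\le 4$ whenever such a triple $p,q,r$ exists.
\begin{itemize}
\item If the triple always exists, $\q(\varphi)\le 4$.
\item Otherwise the configuration is very special: all six simple points form a chain infinitely near to each other, proximate to $p$ in a prescribed way. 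There I would first pass through an intermediate map built from $p$ and two base points of a quadratic map not among those of $\varphi$. This produces a degree-4 map of type $(4;2^3,1^3)$ or another de Jonquières map, and then I would apply Lemma~\ref{ql=2}. This gives $\q(\varphi)\le 5$.
\end{itemize}
More precisely, I would prove $\q(\varphi)\le 4$ unless the cubic obtained is forced to be equivalent to \eqref{phi1} for every admissible choice of $\sigma$.

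\textbf{Length 3 versus 4.} Next I would separate quadratic length $3$ from $4$. I do not need to classify those fully. The theorem only asks for the values $3$, $4$, $5$ and a characterization of the value $5$, so I only need examples showing that each value occurs.

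\textbf{Characterizing length 5.} This is the main obstacle. It splits into two directions.

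First, $\q(\varphi)=5$ forces $\varphi$ to be equivalent to \eqref{phi1.1}. Suppose $\q(\varphi)=5$. Then every admissible $\sigma$ as above gives a cubic of quadratic length $3$, i.e.\ equivalent to \eqref{phi1}. The cubic \eqref{phi1} has a very degenerate base-point configuration: a single proper point with all others in a chain. This forces the base points of $\varphi$ to be in a single proper point with a maximal proximity chain (the $1^6$ points successively infinitely near, with the triple point proper). Maps of the shape $[xz^3+f(y,z):yz^3:z^4]$ with $f$ a quartic binary form realize this configuration. Normalizing by affine changes $x\mapsto x+ay^2+\dots$ and by scaling reduces $f$ to $y^4+ky^2z^2$. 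Scaling $y$ then gives $k\in\{0,1\}$.

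Second, the two maps in \eqref{phi1.1} really have quadratic length $5$. I must exclude decompositions of length $4$ that do not pass through a cubic at step one. The intermediate degrees in a 4-chain from degree 4 to degree 1 could be $(4,3,2,\dots)$ or could rise, e.g.\ to $5$. To handle these I would use the explicit base-point structure of $\varphi$: every quadratic map $\sigma$ has at most two base points in common with $\varphi$, and by the single-chain configuration the resulting map keeps a long chain. I would then check case by case, using the degree formula, that no such map has quadratic length at most $2$ when the degree is 3 or 4, and that it is not reachable within the remaining steps. This exhaustive case analysis of infinitely near configurations is where I expect most of the work.
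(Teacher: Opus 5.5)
Your lower bound $\q(\varphi)\ge 3$ is correct and agrees with Lemma \ref{cor5.14}. So is the bound $\q(\varphi)\le 4$ whenever the triple point and two simple base points support a quadratic map, which is Lemmas \ref{lem:5.4} and \ref{lem:5.5}. The gaps are in everything about the remaining configurations.

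\textbf{The upper bound $\q(\varphi)\le 5$.} Your route cannot work. If $\deg(\varphi\circ\sigma^{-1})=4$, then $\varepsilon=0$ in Proposition \ref{PropMAC118b}, so every multiplicity is unchanged and the type stays $(4;3,1^6)$. A single quadratic step therefore never produces type $(4;2^3,1^3)$; if it did, Lemma \ref{ql=2} would give $\q(\varphi)\le 3$, contradicting the theorem you are proving. Basing $\sigma$ at $p$ and two non-base points gives $\varepsilon=1$, i.e.\ degree $5$. Lemma \ref{ql=2} says nothing about de Jonquières maps.

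The leftover configurations are also not all single chains. The paper's combinatorial lemma shows the enriched graph is \eqref{graph1.1}, \eqref{graph2.1} or \eqref{graph4.1}, and in \eqref{graph4.1} two simple points lie in the first neighbourhood of $p$. The paper normalizes each graph up to equivalence (to $\varphi_1,\varphi_2,\varphi_3$, by explicit automorphisms in standard coordinates). It then writes down decompositions into $5$, $4$ and $4$ quadratic maps. You need some such concrete input here.

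\textbf{The direction $\q(\varphi)=5\Rightarrow\varphi\sim\varphi_1$.} Your argument starts from a false premise. If any admissible $\sigma$ exists, then $\varphi\circ\sigma^{-1}$ is a cubic, and $\q(\varphi)\le 4$ by Theorem \ref{thm3.1} whichever cubic it is. So $\q(\varphi)=5$ only tells you that no admissible triple exists; the configuration of \eqref{phi1} plays no role.

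What you then have to do, and don't, is rule out \eqref{graph2.1} and \eqref{graph4.1} by proving $\q\le 4$ for them. For instance, $\varphi_2=[y(xz^2+y^3):z(xz^2+y^3):z^4]$ has all six simple points in one chain over $p$, yet $\q(\varphi_2)=4$. Indeed its inverse has a proper simple base point, so Lemma \ref{lem:5.4} and Remark \ref{rem:inverse} apply. Nothing in your sketch separates it from $\varphi_1$.

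You also only show that maps $[xz^3+f:yz^3:z^4]$ realize \eqref{graph1.1}. You need the converse: every map with that graph is equivalent to one of them with $k\in\{0,1\}$.

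\textbf{The bound $\q(\varphi_1)\ge 5$.} Consider $\varphi_1\circ\rho_1^{-1}$ with $\rho_1$ based at $p_1$, $p_2$ and a further (necessarily non-base) point. Your case check must show this map has $\q\ge 4$, not merely $\q>2$. That map has graph \eqref{graph4.1}, not ``a long chain'', and the paper invokes Corollary \ref{corT4.1} at this point.

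Your degree count does suffice once you observe that \eqref{graph4.1} again admits no admissible triple. Then every further step keeps degree $\ge 4$, and in degree $4$ keeps type $(4;3,1^6)$. The degree-$5$ branch follows from Lemma \ref{cor5.14}, and the degree-$8$ branch from one more degree count.
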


Let us briefly explain the contents of this paper.

In Section \ref{sec:prel} we recall the definition and properties of infinitely near points and we review the definition of standard coordinates that we introduced in \cite{CN} to describe them.

In Section \ref{sec:plane_Cremona_maps} we recall the basic definitions regarding plane Cremona maps, their homaloidal type and their lengths.

In Section \ref{sec:weighted} we define the enriched weighted proximity graphs for a plane Cremona map of degree 4 and length 1.

In Section \ref{sec:height} we study quadratic length of plane Cremona maps of degree 4 and we prove Theorem \ref{thm1.2}.

\section{Notation, preliminaries and infinitely near points}
\label{sec:prel}

Let us recall the definition of the bubble space of $\PP^2$, useful to define infinitely near points and related properties.

\begin{definition}[{cf.\ \cite[\S7.3.2]{Dolgachev}, \cite[Chapter 5, \S35]{M}}]
Let us denote by $\BB(\PP^2)$ the so-called \emph{bubble space} of $\PP^2$, which is defined as follows.
Consider all surfaces $X$ \emph{above} $\PP^2$, i.e.\ all surfaces $X$ such that there exists a birational morphism $X\to\PP^2$.
If $X_1,X_2$ are two surfaces above $\PP^2$, say $\pi_1\colon X_1\to\PP^2$ and $\pi_2\colon X_2\to\PP^2$ are birational morphisms,
one identifies $p_1\in X_1$ with $p_2\in X_2$ if the birational map $(\pi_2)^{-1}\circ \pi_1\colon X_1\dasharrow X_2$ is a local isomorphism at $p_1$ that sends $p_1$ to $p_2$.
The bubble space $\BB(\PP^2)$ is the union of all points of all surfaces above $\PP^2$ modulo the equivalence relation generated by these identifications.

For any birational morphism $X\to\PP^2$, there is an injective map $X\to\BB(\PP^2)$, therefore we will identify points of $X$ with their images in $\BB(\PP^2)$.

One says that $p_1\in\BB(\PP^2)$ is \emph{infinitely near} $p_2\in\BB(\PP^2)$, say $p_1\in X_1$ and $p_2\in X_2$, with birational morphisms $\pi_1\colon X_1\to\PP^2$ and $\pi_2\colon X_2\to\PP^2$, if the birational map $(\pi_2)^{-1}\circ\pi_1\colon X_1\dasharrow X_2$ is defined at $p_1$, sends $p_1$ to $p_2$, but is not a local isomorphism at $p_1$. In such a case we write that $p_1 \infnear p_2$.

One moreover says that $p_1$ is \emph{in the first neighbourhood} of $p_2$, or that $p_1$ is \emph{infinitely near $p_2$ of the first order}, if $(\pi_2)^{-1}\circ\pi_1$ corresponds locally to the blow-up of $p_2$.
In such a case we write that $p_1 \infnear_1 p_2$.

If $p_1 \infnear p_2$ then one defines the \emph{infinitesimal order} of $p_1$ with respect to $p_2$ by induction, namely if $p_1 \infnear_1 p_3$ and $p_3 \infnear_k p_2$ for some $k$, then $p_1$ is \emph{infinitely near $p_2$ of order $k+1$}. 

If $p_1\infnear p_2$ and $p_1\in X_1$, then there is a unique irreducible curve $E_2\subset X_1$ which corresponds to the exceptional curve of the blowing-up of $p_2\in X_2$. One says that $p_1$ is \emph{proximate} to $p_2$ if $p_1\in E_2$. In such a case we write that $p_1\prox p_2$.

If $p_1\prox p_2$ and $p_1\infnear_k p_2$ with $k >1$, then we say that $p_1$ is \emph{satellite} to $p_2$ and we write $p_1 \satel p_2$. Otherwise, if $p_1$ is not satellite to $p_2$, then we write that $p_1 \notsatel p_2$.

One says that a point $p\in\PP^2\subset\BB(\PP^2)$ is a \emph{proper point of $\PP^2$}.

If $p_1\infnear p_2 \in \PP^2$ where $p_1 \in X_1$ and $\pi_1 : X_1 \rightarrow \PP^2$ is a birational morphism, we say that a plane curve $C$ passes through $p_1$ if $C$ passes through $p_2$ and the strict transform of $C$ on $X_1$ via $\pi_1$ passes through $p_1$.
\end{definition}

\begin{definition}[{cf.\ \cite[\S3]{CN}}]
We recall the \emph{standard coordinates} of infinitely near points that we introduced in \cite{CN}.

Let $p_1=[a:b:c] \in \PP^2$. Let us consider three cases:
\begin{itemize}
\item[$(i)$] if $c \neq 0$, then $p_1 =\bigg[\dfrac{a}{c}:\dfrac{b}{c}:1\bigg] = [\overline{a}:\overline{b}:1]$;
\item[$(ii)$] if $c = 0$ and $b \neq 0$, then   $p_1 =\bigg[\dfrac{a}{b}:1:0\bigg] = [\overline{a}:1:0]$;
\item[$(iii)$] if $c=b= 0$, then $p_1 = [1:0:0]$.
\end{itemize}

In case $(i)$, we work on the affine chart $U_2 \simeq \K^2_{\overline{x},\overline{y}}$, so that $p_1$ corresponds to the point $\overline{p}_1 = (\overline{a},\overline{b})$, and we define the isomorphism $\alpha_1 \colon \K^2_{\overline{x},\overline{y}} \rightarrow \K^2_{x_0,y_0}$ by
$\alpha_1(\overline{x},\overline{y}) = (\overline{x}-\overline{a},\overline{y}-\overline{b}).$

In case $(ii)$, we work on the affine chart $U_1 \simeq \K^2_{\overline{x},\overline{z}}$, so that $p_1$ corresponds to the point $\overline{p}_1 = (\overline{a},0)$, and we define the isomorphism $\alpha_1 \colon \K^2_{\overline{x},\overline{z}} \rightarrow \K^2_{x_0,y_0}$ by
$\alpha_1(\overline{x},\overline{z}) = (\overline{x}-\overline{a},\overline{z}).$

In case $(iii)$, we work on the affine chart $U_0 \simeq \K^2_{\overline{y},\overline{z}}$, so that $p_1$ corresponds to the point $\bar p_1=(0,0)$, and we define the isomorphism $\alpha_1 \colon \K^2_{\overline{y},\overline{z}} \rightarrow \K^2_{x_0,y_0}$ by
$\alpha_1(\overline{y},\overline{z}) = (\overline{y},\overline{z}).$

In all three cases, we defined $\alpha_1$ in such a way that $\alpha_1(\overline{p}_1) = (0,0)\in \K^2_{x_0,y_0}$.

We blow-up $\K^2_{x_0,y_0}$ at $(0,0)$ and we consider the first chart $\K^2_{x_1,y_1}$ where the blowing-up map is given in coordinates by $x_0=x_1, y_0=x_1y_1$.

In this chart, the exceptional curve $E_1$ has local equation $x_1=0$, hence a point $p_2 \infnear_1 p_1$ corresponds either to the point $(0,t_2) \in E_1$ with $t_2 \in \K$ or to the point which is the origin of the second chart.
In the former case, let us say that $p_2$ has standard coordinates $p_2=(p_1,t_2)$, while in the latter case let us say that $p_2$ has standard coordinates $p_2=(p_1,\infty)$.
Setting $\PP^1=\K\cup\{\infty\}$, in both cases we may write $p_2=(p_1,t_2)$ with $t_2\in\PP^1$.

We go on by blowing-up at $p_2=(p_1,t_2)$, with $t_2\in\PP^1=\K\cup\{\infty\}$.
Either $t_2\in\K$ or $t_2=\infty$. In the former case, with notation as above, let $\alpha_2 \colon \K^2_{x_1,y_1} \rightarrow \K^2_{\bar x_1,\bar y_1}$ be the isomorphism defined by
$\alpha_2(x_1,y_1) = (x_1, y_1-t_2).$

In the latter case, $p_2$ corresponds to the origin of the second chart of the blowing-up of $\K^2_{x_0,y_0}$ at $(0,0)$ that we write  $\K^2_{x'_1,y'_1}$, where the blowing-up map is given by $x_0=x'_1 y'_1, y_0=y'_1$. 
Let $\alpha_2 \colon \K^2_{x'_1,y'_1} \rightarrow \K^2_{\bar x_1,\bar y_1}$ be the isomorphism
$\alpha_2(x'_1,y'_1) = (y'_1, x'_1).$

In this way, in both cases, in $\K^2_{\bar x_1,\bar y_1}$ the exceptional curve $E_1$ has local equation $\bar x_1=0$ and the point $p_2$ corresponds to the origin $(0,0)$.

We blow-up $\K^2_{\bar x_1,\bar y_1}$ at $(0,0)$ and we consider the first chart $\K^2_{x_2,y_2}$ where the blowing-up map is given in coordinates by $\bar x_1=x_2, \bar y_1=x_2y_2$.
In this chart, the exceptional curve $E_2$ has local equation $x_2=0$, hence a point $p_3 \infnear_1 p_2$ corresponds either to the point $(0,t_3) \in E_2$ with $t_3 \in \K$ or to the point which is the origin of the second chart.

Let us say that $p_3$ has standard coordinates $p_3=(p_1,t_2,t_3)$, where either $t_3\in\K$ in the former case or $t_3=\infty$ in the latter case.

Note that the strict transform of $E_1$ can be seen only in the second chart and it meets $E_2$ at the origin of the second chart.
In other words, the point with standard coordinates $(p_1,t_2,\infty)$ is satellite to $p_1$.

More generally, let us proceed by induction on the infinitesimal order.
Suppose that we have blown-up the point $p_{r-1}$ with standard coordinates $p_{r-1}=(p_1,t_2,\ldots,t_{r-1})$, with $t_i\in\PP^1=\K\cup\{\infty\}$, $i=2,\ldots,r-1$.
Following the procedure described above, we may assume that $p_{r-1}$ is the origin of a chart $\K^2_{\bar x_{r-1},\bar y_{r-1}}$ in such a way that the exceptional curve $E_{r-1}$ has local equation $\bar x_{r-1}=0$.

In the first chart of the blowing up of $\K^2_{\bar x_{r-1},\bar y_{r-1}}$ at $(0,0)$, given in coordinates by $\bar x_{r-1}=x_r, \bar y_{r-1}=x_ry_r$, the exceptional curve $E_r$ has local equation $x_r=0$, hence a point $p_r \infnear_1 p_{r-1}$ corresponds either to the point $(0, t_r) \in E_r$ with $t_r\in\K$ or to the point which is the origin of the second chart, given in coordinates by $\bar x_{r-1}=x_ry_r, \bar y_{r-1}=y_r$.

Let us say that $p_r$ has standard coordinates $p_r=(p_1,t_2,\ldots,t_r)$, where $t_r\in\K$ in the former case and $t_r=\infty$ in the latter case.

In such a way, we are able to define the standard coordinates for each point of $\BB(\PP^2)$ as given by a point in $\PP^2$ and a sequence of points of $\PP^1$.
\end{definition}

The following lemma has been proved in \cite{CN}.

\begin{lemma}
Suppose that $p_1, p_2, p_3, p_4, p_5 \in \BB(\PP^2)$ are such that no three of them are collinear and either
\begin{itemize}
\item $p_1,p_2,p_3,p_4,p_5 \in \PP^2$; or
\item $p_1,p_2,p_3,p_4 \in \PP^2$ and $p_5 \infnear_1 p_1$; or
\item $p_1,p_2,p_3 \in \PP^2$, $p_5 \infnear_1 p_4 \infnear_1 p_1$ and $p_5 \notsatel p_1$; or
\item $p_1,p_2,p_3 \in \PP^2$, $p_4 \infnear_1 p_1$, and $p_5 \infnear_1 p_2$; or
\item $p_1,p_2 \in \PP^2$, $p_5 \infnear_1 p_3 \infnear_1 p_1$, $p_4 \infnear_1 p_2$ and $p_5 \notsatel p_1$; or
\item $p_1,p_2 \in \PP^2$, $p_5 \infnear_1 p_4 \infnear_1 p_3 \infnear_1 p_1$, $p_4 \notsatel p_1$, and $p_5 \notsatel p_3$; or
\item $p_5 \infnear_1 p_4 \infnear_1 p_3 \infnear_1 p_2 \infnear_1 p_1 \in \PP^2$, $p_3 \notsatel p_1$, $p_4 \notsatel p_2$, and $p_5 \notsatel p_3$.
\end{itemize}
Then, there exists a unique irreducible plane conic passing through $p_1,\ldots, p_5$.
\end{lemma}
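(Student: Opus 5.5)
Existence and uniqueness both come from linear algebra on the $\PP^5$ of plane conics, together with intersection multiplicity arguments. Passing through a proper point, or through an infinitely near point given that the conic already passes through its predecessor, is one linear condition on the coefficients of the conic. So the conics through $p_1,\ldots,p_5$ form a linear system of projective dimension at least $0$, and at least one conic exists in each of the listed configurations. The plan is to show two things: every conic through the five points is irreducible, and the linear system is a single point.

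\emph{Irreducibility.} Suppose a conic $C$ through $p_1,\ldots,p_5$ is reducible, say $C=L_1+L_2$ (possibly $L_1=L_2$). We count multiplicities in the bubble space, using the proximity inequalities and the fact that a line passes through at most one point in each first neighbourhood chain without satellite jumps. Three observations cover the cases.
\begin{itemize}
\item A line through $p$ and some $q\infnear_1 p$ is the line with that tangent direction.
\item A line cannot pass through a point $p_{i+1}\infnear_1 p_i\infnear_1 p_{i-1}$ with $p_{i+1}\notsatel p_{i-1}$ unless the three are "collinear" in the infinitesimal sense. This sense is the one the hypothesis "no three collinear" forbids.
\item A satellite point $p_5\satel p_1$ would force singular conics. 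This is why the non-satellite conditions $p_5\notsatel p_3$, etc.\ are imposed.
\end{itemize}
By pigeonhole, one of the two lines must contain at least three of the five points, counted with their infinitely near structure. Going case by case through the seven configurations, this contradicts either non-collinearity or a non-satellite assumption. If $C=2L$, the double line has multiplicity $2$ at its points, but a point of the chain lying off $L$ gives a contradiction in the same way.

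\emph{Uniqueness.} Suppose two distinct conics $C,C'$ pass through $p_1,\ldots,p_5$. Both are irreducible by the previous step, so they have no common component. By Bézout, $C\cdot C'=4$. On the other hand, $C\cdot C'\ge\sum_i m_{p_i}(C)m_{p_i}(C')=5$, using Noether's formula for intersection multiplicity with infinitely near points: each $p_i$ is a smooth point of both strict transforms. This is a contradiction, so the system is a single point. The same inequality also shows directly that the five linear conditions are independent.

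The main obstacle is the irreducibility step in the deepest configurations (the last two cases). There the reducibility must be checked against infinitely near collinearity, meaning whether a line's strict transform passes through a given $p_{i+1}$. I would handle this concretely in standard coordinates. Normalize $p_1=[0:0:1]$ and the relevant tangent $t_2=0$. Then verify that the strict transform of a line through $p_1$ reaches $p_3$ only if $t_3=0$, and that reaching $p_4$ or beyond forces a satellite configuration. Such configurations are excluded by the hypotheses, with "three collinear" interpreted accordingly.
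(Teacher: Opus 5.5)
\textbf{The gap is in the existence step.} Your uniqueness step is fine. The problem is that the proposal conflates two meanings of ``passing through'' an infinitely near point.

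Requiring the strict transform of $C$ to pass through $q\infnear_1 p$ is \emph{not} a linear condition: its linear span contains every conic singular at $p$. What is linear is the \emph{virtual} condition, namely that the successive virtual transforms $C_i=\pi_i^*C_{i-1}-E_i$ have multiplicity at least $1$ at the next point. So your dimension count only produces a member of the system $\Lambda$ of conics satisfying the virtual conditions. Such a member need not pass through $p_1,\dots,p_5$ in the strict sense.

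Your pigeonhole argument (each $p_i$ lies on $L_1$ or on $L_2$, so one line carries three points) is correct for strict passage. But there it is trivial and never uses the non-satellite hypotheses, which is a sign that the real difficulty has been bypassed. For members of $\Lambda$ it is false:
\begin{itemize}
\item In the third configuration, take $C=\overline{p_1p_2}+\overline{p_1p_3}$. After blowing up $p_1$ and then $p_4$, its virtual transforms are $\tilde C+E_1$ and then $\tilde C+\tilde E_1$. So it satisfies the conditions at $p_1,\dots,p_4$, although no line contains three of the points. It fails at $p_5$ only because $p_5\notsatel p_1$. If $p_5\satel p_1$, it would be the unique member of $\Lambda$ and no irreducible conic would exist.
\item In the seventh configuration, take the double line $2L$ with $L$ tangent to $p_2$. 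Its virtual transforms are $2\tilde L+E_1$, then $2\tilde L+\tilde E_1+2E_2$, then $2\tilde L+\tilde E_1+2\tilde E_2+E_3$. So it satisfies the conditions at $p_3$ and $p_4$, which lie off $L$, contrary to what you assert for double lines. It is excluded only by $p_5\notsatel p_3$.
\end{itemize}

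\textbf{What is missing} is the check that $\Lambda$ contains no degenerate conic. For each configuration, run through the line pairs and double lines that are singular at a base point, and track the exceptional components $\tilde E_i$ entering their virtual transforms. These components only reach satellite points, which the hypotheses exclude; non-collinearity disposes of the remaining cases. In the seventh configuration, for instance:
\begin{itemize}
\item $p_3\notsatel p_1$ rules out conics singular at $p_1$ with no line tangent to $p_2$;
\item $p_4\notsatel p_2$ rules out two distinct lines through $p_1$ with one tangent to $p_2$;
\item $p_5\notsatel p_3$ rules out the double tangent line.
\end{itemize}
Once the member of $\Lambda$ is known to be irreducible, it is smooth, so virtual and strict passage coincide. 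Your B\'ezout--Noether count ($4\ge 5$) then correctly gives uniqueness and the independence of the conditions.

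This also corrects your last paragraph. A line, or indeed any smooth curve, only ever passes through free points, so ``reaching $p_4$'' is plain collinearity. Satellites enter only through the $\tilde E_i$ in virtual transforms of singular conics.

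(The paper itself gives no argument for this lemma and simply refers to the authors' earlier work [CN].)
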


\section{Plane Cremona maps and their lengths}\label{sec:plane_Cremona_maps}

Recall that a plane Cremona map $\varphi\colon \PP^2 \dasharrow \PP^2$ can be written as
\begin{equation*}
\varphi( [x : y : z] ) = [f_0(x, y, z) : f_1(x, y, z) : f_2(x, y, z)]
\end{equation*}
where $f_i\in\K[x,y,z]$, $i=0,1,2$, are homogeneous polynomials of the same degree, say $d$, that is called the \emph{degree} of $\varphi$ if $f_0, f_1, f_2$ have no common factor of positive degree.
Plane Cremona maps of degree 1 are automorphisms of $\PP^2$, i.e.\ elements of $\Aut(\PP^2)\cong\PGL(3, \K)$.
For convenience, we refer to the plane Cremona maps of degree 2 as \emph{quadratic maps}.

\begin{definition}
Let us say that  two plane Cremona maps $\varphi,\varphi'\colon\PP^2\dasharrow\PP^2$  are \emph{equivalent} if there exist two automorphisms $\alpha,\alpha'\in\Aut(\PP^2)$ such that $\varphi'=\alpha'\circ\varphi\circ\alpha$.
\end{definition}

\begin{remark}
It is very well-known that quadratic Cremona maps are equivalent to one and only one of the following three maps:
\begin{align}
\sigma([x:y:z]) &= [yz : xz : xy], \label{sigma} \\
\rho([x:y:z]) &=[xy : z^2 : yz], \label{rho} \\
\tau([x:y:z]) &=[x^2 : xy : y^2-xz]. \label{tau}
\end{align}
Note that $\sigma, \rho, \tau$ are \emph{involutions}, i.e.\ $\sigma^{2}=\rho^{2}=\tau^{2}=\id_{\PP^2}$.
More generally, a plane Cremona map $\varphi$ is called an \emph{involution}, or also an \emph{involutory map}, if $\varphi^2=\id_{\PP^2}$.
\end{remark}

The following lemma is very well-known but we have not found a proper reference.

\begin{lemma}\label{involutory}
Suppose that there exists a quadratic Cremona map based at three points $q_1, q_2, q_3\in\BB(\PP^2)$, namely either one of the following three cases occurs:
\begin{enumerate}
  \item $q_1, q_2, q_3 \in \PP^2$;
  \item $q_1, q_2 \in \PP^2$ and $q_3\infnear_1 q_1$;
  \item $q_3 \infnear_1 q_2 \infnear_1 q_1\in\PP^2$ and $q_3\notsatel q_1$.
\end{enumerate}
Then there exists an involutory quadratic map $\psi$ based at $q_1, q_2, q_3$.
\end{lemma}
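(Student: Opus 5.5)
The plan is to reduce to the three normal forms $\sigma,\rho,\tau$ by a projective change of coordinates, and then to conjugate each of them. If a quadratic map $\varphi$ is based at $q_1,q_2,q_3$, then by the Remark $\varphi=\alpha'\circ\phi_0\circ\alpha$ with $\phi_0\in\{\sigma,\rho,\tau\}$ and $\alpha,\alpha'\in\Aut(\PP^2)$. Which of the three cases occurs is determined by the configuration of base points. In case (1) we have $\phi_0=\sigma$, in case (2) $\phi_0=\rho$, and in case (3) $\phi_0=\tau$. Moreover, $\alpha$ sends $q_1,q_2,q_3$ to the base points of $\phi_0$, matched in the correct order and configuration.

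We then set $\psi=\alpha^{-1}\circ\phi_0\circ\alpha$. This is a quadratic map, and since $\phi_0^2=\id$ we get $\psi^2=\alpha^{-1}\phi_0^2\alpha=\id$, so $\psi$ is an involution. Its base points are the images under $\alpha^{-1}$ of the base points of $\phi_0$, and these are exactly $q_1,q_2,q_3$. Indeed, the base points of $\phi_0\circ\alpha$ equal the base points of $\varphi$, because post-composition by the automorphism $\alpha'$ does not change base points. Post-composing by $\alpha^{-1}$ likewise leaves the base points unchanged. Hence $\psi$ is based at $q_1,q_2,q_3$, as required.

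The only point needing care is the claim that base points are preserved under composition with automorphisms and transported by $\alpha^{-1}$ in the source. This holds because an automorphism of $\PP^2$ induces a bijection of $\BB(\PP^2)$ that preserves infinitely near relations, proximity and satellite relations. No case analysis beyond identifying $\phi_0$ is needed, so I do not expect any real obstacle.
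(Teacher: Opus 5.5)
Your proof is correct and is essentially the paper's: both take $\psi=\alpha^{-1}\circ\phi_0\circ\alpha$ with $\phi_0\in\{\sigma,\rho,\tau\}$, where $\alpha$ moves $q_1,q_2,q_3$ to the base points of $\phi_0$, so that $\psi^2=\id$. The only difference is the source of $\alpha$. The paper asserts that such an $\alpha\in\Aut(\PP^2)$ exists directly from the configuration of the $q_i$, tacitly using non-collinearity and transitivity. You instead extract $\alpha$ from the normal form $\varphi=\alpha'\circ\phi_0\circ\alpha$ of the given quadratic map, which uses the existence hypothesis directly and makes your identification of $\phi_0$ case by case unnecessary.
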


\begin{proof}
Recall that the three base points of $\sigma$ are the coordinate points $p_1=[1:0:0]$, $p_2=[0:1:0]$, $p_3=[0:0:1]$, while the three base points of $\rho$ are $p_1$, $p_2$ and the point with standard coordinates $(p_1,\infty)$, i.e.\ the infinitely near point to $p_1$ in the direction of the line $y=0$, and finally the standard coordinates of the base points of $\tau$ are respectively $p_3$, $(p_3,\infty)$, $(p_3,\infty,1)$, i.e.\ the infinitely near point of order 2 to $p_3$ in the direction of the plane conic $y^2-xz=0$ (the point $(p_3,\infty)$ is the infinitely near point to $p_3$ in the direction of the line $x=0$).

In case $(1)$, let $\alpha\in\Aut(\PP^2)$ be an automorphism which maps $q_1, q_2, q_3$ to the three coordinate points $p_1$, $p_2$, $p_3$; the map $\psi=\alpha^{-1}\circ \sigma \circ \alpha$ satisfies the required property.

In case $(2)$, let $\beta\in\Aut(\PP^2)$ be an automorphism which maps $q_1$ to $p_1$, $q_2$ to $p_2$, $q_3$ to $(p_1,\infty)$; the map $\psi=\beta^{-1}\circ \rho \circ \beta$ satisfies the required property.

In case $(3)$, let $\gamma\in\Aut(\PP^2)$ be an automorphism which maps $q_1$ to $p_3$, $q_2$ to $(p_3,\infty)$, $q_3$ to $(p_3,\infty,1)$; the map $\psi=\gamma^{-1}\circ \tau \circ \gamma$ satisfies the required property.
\end{proof}

Recall that the linear system defining a plane Cremona map $\varphi$ of degree $d$ has base points $p_1,\ldots,p_r\in\BB(\PP^2)$, of respective multiplicity $m_1,\ldots,m_r$,
that satisfy the following equations:
\begin{equation}\label{eq:Cremona}
d^2-1=\sum_{i=1}^r m_i^2,
\qquad\qquad
3(d-1)=\sum_{i=1}^r m_i.
\end{equation}
One says that $(d;m_1,\ldots,m_r)$ is the homaloidal type of $\varphi$.

A plane Cremona map is called \emph{de Jonqui\`eres} if it has degree $d$ and a base point of multiplicity $d-1$, i.e.\ its homaloidal type is $(d;d-1,1,\ldots,1)=(d;d-1,1^{2d-2})$.

According to the Noether-Castelnuovo Theorem, any plane Cremona map $\varphi\colon\PP^2\dasharrow\PP^2$ of degree larger than one can be written as the composition of quadratic maps:
\begin{equation}\label{eq:decomp2}
\varphi=\psi_n\circ\psi_{n-1}\circ \cdots \circ \psi_2\circ \psi_1.
\end{equation}

\begin{definition}
Let us call the \emph{quadratic length} of plane Cremona map $\varphi$ the minimum $n$ such that there exists a decomposition \eqref{eq:decomp2} where $\psi_i$ is a quadratic map, for each $i=1,\ldots,n$, and denote it by $\q(\varphi)$.
\end{definition}

Recall that Blanc and Furter in \cite{B-F} defined the \emph{length} of a plane Cremona map $\varphi$ as the minimum $n$ such that there exists a decomposition \eqref{eq:decomp2} where $\psi_i$ is a de Jonqui\`eres map, for each $i=1,\ldots,n$, and denoted it by $\lgth(\varphi)$.
Clearly, one has that
$\lgth(\varphi) \leq \q(\varphi)$
and two equivalent plane Cremona maps have the same length and the same quadratic length.

\begin{remark}\label{rem:inverse}
Taking the inverse maps, a decomposition \eqref{eq:decomp2} of a plane Cremona map $\varphi$ determines a decomposition of the inverse map $\varphi^{-1}$, and conversely.
In particular, a plane Cremona map $\varphi$ and its inverse $\varphi^{-1}$ have the same quadratic length: $\q(\varphi)=\q(\varphi^{-1})$.
\end{remark}

Note that Equations \eqref{eq:Cremona} imply that plane Cremona maps of degree 2 and 3 are de Jonqui\`eres, whereas those of degree 4 either are de Jonqui\`eres or their homaloidal type is $(4;2,2,2,1,1,1)=(4;2^3,1^3)$.
In the former case, they have length 1, while in the latter case, they have length 2.

\begin{lemma}\label{ql=2}
Plane Cremona maps of degree 4 and length 2 have quadratic length 2 too.
\end{lemma}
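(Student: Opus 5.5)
Let $\varphi$ be a plane Cremona map of degree 4 and length 2, i.e.\ of homaloidal type $(4;2^3,1^3)$, with double base points $p_1,p_2,p_3$ and simple base points $p_4,p_5,p_6$ in $\BB(\PP^2)$. Since $\varphi$ is not quadratic, $\q(\varphi)\ge 2$. So the plan is to exhibit a quadratic map $\psi$ such that $\varphi\circ\psi^{-1}$ is quadratic.

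The natural choice is a quadratic map $\psi$ based at the three double points $p_1,p_2,p_3$. A quadratic map based at $q_1,q_2,q_3$ sends a curve of degree $d$ with multiplicities $m_1,m_2,m_3$ there to a curve of degree $2d-m_1-m_2-m_3$. Here this gives degree $8-6=2$, so $\varphi\circ\psi^{-1}$ has degree 2 and is a quadratic map. More precisely, the homaloidal net of $\varphi$ is transformed into the net of conics through the three images of $p_4,p_5,p_6$ under $\psi$.

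So the whole argument reduces to showing that $p_1,p_2,p_3$ form an admissible base-point configuration for a quadratic map. By Lemma~\ref{involutory} this means: they lie in one of the three configurations listed there, up to relabelling, and they are not collinear. I will establish this with three checks.

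\emph{Non-collinearity.} If a line $L$ passed through $p_1,p_2,p_3$, then a general member $C$ of the homaloidal net would satisfy $C\cdot L\ge 2+2+2=6>4$. So $L$ would be a fixed component of the net, which is impossible.

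\emph{Allowed proximity relations.} Suppose, for instance, $p_2\infnear_1 p_1$ and $p_3\infnear_1 p_2$ with $p_3\satel p_1$. Then the proximity inequality at $p_1$ requires $m_1\ge m_2+m_3=4$, but $m_1=2$, a contradiction.

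\emph{Order of the points.} The same proximity inequality $m_1\ge \sum_{q\prox p_1} m_q$ forbids two double points both proximate to a single double point. This leaves exactly the three configurations of Lemma~\ref{involutory}: three proper points; two proper points with the third in the first neighbourhood of one of them; or a chain $q_3\infnear_1 q_2\infnear_1 q_1$ with $q_3\notsatel q_1$. The map $\psi$ then exists by Lemma~\ref{involutory}. A double point cannot be infinitely near a simple point, since multiplicities do not increase along infinitely near points, so the double points alone carry a valid configuration.

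The main obstacle is the case analysis of these configurations. The key inputs there are the proximity inequalities and the fact that the net has no fixed components. I also need the standard fact that the image of the homaloidal net under $\psi$ is again homaloidal, with no fixed part, so that the degree of the composite is exactly 2. This holds because the multiplicity computation leaves no exceptional curve of $\psi$ as a fixed component. It is checked with the genericity of $C$: a general $C$ passes through $p_1,p_2,p_3$ with multiplicity exactly 2, so no exceptional curve of $\psi$ appears as a fixed component. Finally, $\varphi=(\varphi\circ\psi^{-1})\circ\psi$ gives $\q(\varphi)=2$.
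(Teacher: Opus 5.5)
Your proposal is correct and follows essentially the paper's route: take a quadratic map based at the three double points, which by Lemma~\ref{involutory} is possible because they are non-collinear and not in a satellite chain, and observe that the degree drops to $2\cdot 4-6=2$. The paper takes $\psi$ involutory so it can apply Proposition~\ref{PropMAC118b} directly to $\varphi\circ\psi$ and only asserts the admissibility. Your use of $\varphi\circ\psi^{-1}$ is equivalent, and your proximity-inequality argument usefully fills in the details the paper omits, including excluding two double points in the first neighbourhood of the third.
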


\begin{proof}
Let $\varphi$ be a plane Cremona map of degree 4 and length 2.
Let $p_1, p_2, p_3$ be the base points of multiplicity 2 of $\varphi$. Note that $p_1, p_2, p_3$ cannot be aligned and, if $p_3 \infnear_1 p_2 \infnear_1 p_1$, then $p_3$ cannot be satellite to $p_1$.
Lemma \ref{involutory} implies that there exists an involutory quadratic map $\psi_1$ based at $p_1, p_2, p_3$. It follows that $\varphi\circ\psi_1$ is a quadratic map $\psi_2$, so that $\varphi=\psi_2\circ\psi_1$ and $\varphi$ has quadratic length 2.
\end{proof}

Therefore, in the following sections, we will deal only with plane Cremona maps of degree 4 that are de Jonquières, or equivalently that have length 1.

\begin{remark}\label{rem:3.7}
Note that an automorphism $\alpha\colon\PP_1^2 \to \PP_2^2$, where the index is added to distinguish the source and the target plane, induces a bijection $\bar{\alpha}\colon\BB(\PP_1^2) \to \BB(\PP_2^2)$ which preserves the proximity relations, namely $p_j$ is proximate to $p_i$ in $\BB(\PP_1^2)$ if and only if $\bar{\alpha}(p_j)$ is proximate to $\bar{\alpha}(p_i)$ in $\BB(\PP_2^2)$.

The quadratic map $\sigma\colon \PP_1^2 \dasharrow \PP_2^2$, given in \eqref{sigma}, also induces a bijection $\bar{\sigma}\colon\BB(\PP_1^2) \to \BB(\PP_2^2)$.
Indeed, recall that $\sigma$ is the composition of the blowing up at the three coordinates points $p_1=[1:0:0]$, $p_2=[0:1:0]$, $p_3=[0:0:1]$ with the blowing down of the strict transform of the coordinate lines $\ell_1\colon x=0$, $\ell_2\colon y=0$, $\ell_3\colon z=0$.
In other words, setting $\pi_1\colon S_1\to\PP^2_1$ and $\pi_2\colon S_2\to\PP^2_2$ the birational morphisms which are the blowing up at $p_1$, $p_2$, $p_3$ respectively of $\PP^2_1$ and of $\PP^2_2$, there is an isomorphism $\tilde{\sigma}\colon S_1\to S_2$ such that $\sigma\circ\pi_1=\pi_2\circ\tilde{\sigma}$.
More explicitly, the bijection $\bar{\sigma}\colon\BB(\PP_1^2) \to \BB(\PP_2^2)$ is such that
\begin{itemize}
  \item if $p\in\PP^2_1\setminus (\ell_1 \cup \ell_2 \cup \ell_3)$, then $\bar{\sigma}(p)=\sigma(p)$;
  \item if $p\in\ell_i\setminus\{p_j,p_k\}$, $\{i,j,k\}=\{1,2,3\}$, then $\bar{\sigma}(p) \infnear_1 p_i$ in the direction of the line passing through $p_i$ and $p$;
  \item if $p\infnear_1 p_i$ in the direction of the line $\ell_j$, where $\{i,j\}\subset\{1,2,3\}$, then $\bar{\sigma}(p) \infnear_1 p_j$ in the direction of the line $\ell_i$;
  \item if $p\infnear_1 p_i$ (not lying on $\ell_j$, $\ell_k$, $\{i,j,k\}=\{1,2,3\}$), then $\bar{\sigma}(p)$ is a proper point of $\ell_i$;
  \item if $p=p_i$, $i=1,2,3$, then we set $\bar{\sigma}(p)=p$.
\end{itemize}
By the Noether-Castelnuovo Theorem, it follows that any plane Cremona map $\varphi\colon \PP_1^2 \dasharrow \PP_2^2$ induces a bijection $\bar{\varphi}\colon\BB(\PP_1^2) \to \BB(\PP_2^2)$, where we use standard coordinates to describe infinitely near points.
\end{remark}

Let us recall how a quadratic map acts on the homaloidal types of plane Cremona maps.

\begin{proposition}\label{PropMAC118b}
Let $p_1, p_2, p_3$ be the base points of an involutory quadratic map $\psi\colon \mathbb{P}^2 \dasharrow \mathbb{P}^2$.
Let $\varphi\colon \mathbb{P}^2 \dashrightarrow \mathbb{P}^2$ be a plane Cremona map of degree $d>1$ with base points $p_4,\ldots,p_r$ and possibly $p_1,p_2,p_3$. Denote by $m_i$ the multiplicity of $\varphi$ at $p_i$, $i=1,\ldots,r$
(that is $m_i=0$ if $p_i$ is not a base point of $\varphi$, $i=1,2,3$). Then, setting $\varepsilon = d-m_1-m_2-m_3$, the map $\varphi\circ \psi$ has
\begin{itemize}
  \item degree $d+\varepsilon$;
  \item for $i=1,2,3$, multiplicity $m_i+\varepsilon$  at $p_i$ (that is, $p_i$ is not a base point when $\varepsilon=-m_i$);
  \item base points of multiplicity $m_i$ for $i \in \{ 4,\ldots,r \}$;
  \item no other base point.
\end{itemize}
\end{proposition}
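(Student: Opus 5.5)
The plan is to reduce to the case where $\psi$ is one of the standard involutions $\sigma,\rho,\tau$, and then check the claim by a local computation on the linear system. By Lemma \ref{involutory}, the involution $\psi$ based at $p_1,p_2,p_3$ is $\alpha^{-1}\circ\chi\circ\alpha$ for some $\alpha\in\Aut(\PP^2)$ and $\chi\in\{\sigma,\rho,\tau\}$. Replacing $\varphi$ by $\varphi\circ\alpha^{-1}$ preserves degree, multiplicities and proximity relations (Remark \ref{rem:3.7}). So we may assume $\psi=\chi$ is one of the three standard maps.

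The key step is to compute the pullback of the linear system $\Lambda$ of $\varphi$, consisting of curves of degree $d$ with multiplicity $\ge m_i$ at $p_i$, under $\psi$. Let $\pi\colon S\to\PP^2$ be the blow-up of $p_1,p_2,p_3$, with exceptional (total) classes $E_1,E_2,E_3$ and line class $H$. Then $\psi$ lifts to an automorphism $\tilde\psi$ of $S$ (for $\sigma$ this is Remark \ref{rem:3.7}; for $\rho$ and $\tau$ the same holds for the iterated blow-up). This automorphism acts on $\mathrm{Pic}(S)$ by
\[
H\mapsto 2H-E_1-E_2-E_3,\qquad E_i\mapsto H-E_j-E_k,
\]
where $\{i,j,k\}=\{1,2,3\}$. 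For $\rho$ and $\tau$ we use the same formulas with total transforms of the exceptional divisors. This follows because $\psi^*$ of a line is a conic through $p_1,p_2,p_3$, and $\psi$ contracts the curve of class $H-E_j-E_k$ onto $p_i$. Applying it to the class $dH-\sum m_iE_i$ gives
\[
(2d-m_1-m_2-m_3)H-\sum_{i=1}^3(d-m_j-m_k)E_i-\sum_{i\ge4}m_iE_i .
\]
Here $2d-\sum m=d+\varepsilon$ and $d-m_j-m_k=m_i+\varepsilon$. The points $p_i$ with $i\ge4$ lie off the contracted curves, or more generally are mapped by $\bar\psi$ to points with the same multiplicity data. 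So one reads off the stated degree and multiplicities.

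It remains to justify that this class really is the homaloidal system of $\varphi\circ\psi$, without fixed components or extra base points. The strict transform of a general member of $\Lambda$ on $S$ is irreducible and contains no exceptional curve or contracted line. Its image under $\tilde\psi$ is therefore irreducible, and it pushes down to an irreducible curve of degree $d+\varepsilon$. The only base points it has are those imposed by the multiplicities above, since $\tilde\psi$ is an isomorphism on $S$. If $\varepsilon=-m_i$, the coefficient vanishes and $p_i$ is not a base point.

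The main obstacle I expect is the non-proper cases $\rho$ and $\tau$. There the $E_i$ are not disjoint $(-1)$-curves, so the Picard action has to be checked with total transforms. One must also verify that $\tilde\psi$ permutes the infinitely near base points $p_i$, $i\ge4$, preserving proximities. I would handle this either by the explicit chart computation in standard coordinates, or by the uniform argument that $\tilde\psi$ is an isomorphism of the blown-up surface, which automatically transports the remaining cluster.
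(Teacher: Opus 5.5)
Your mechanism (lift $\psi$ to an automorphism $\tilde\psi$ of the blow-up $S$ of $p_1,p_2,p_3$, apply $\tilde\psi^*$ to $dH-\sum m_iE_i$, transport the rest of the cluster) is the standard characteristic-matrix argument. The paper itself gives no proof; it only cites Proposition 4.2.5 of Alberich-Carrami\~nana's book, so there is nothing to compare beyond that.

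The genuine gap is your first step. Lemma~\ref{involutory} is an existence statement: it produces \emph{some} involution $\alpha^{-1}\circ\chi\circ\alpha$ with the prescribed base points. It is not a normal form for \emph{every} involutory quadratic map based there, and such a normal form does not exist. Take $\psi([x:y:z])=[xz:yz:xy]$, i.e.\ $(x,y)\mapsto(1/y,1/x)$ on $z=1$. It is an involution based at the three coordinate points. However, it contracts the line $x=0$ (through $p_2,p_3$) onto $p_2$ and the line $y=0$ onto $p_1$, so $\tilde\psi^*E_1=H-E_1-E_3$ and $\tilde\psi^*E_2=H-E_2-E_3$. Your formula $E_i\mapsto H-E_j-E_k$ rests exactly on the unproved sentence ``$\psi$ contracts the curve of class $H-E_j-E_k$ onto $p_i$'', and here it fails.

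This is not cosmetic. Let $\varphi$ be a cubic de Jonqui\`eres map with double point at $p_1=[1:0:0]$ and simple base points off the coordinate lines. A general member $xQ(y,z)+K(y,z)$ of its net pulls back to $y^2\bigl(xzQ(z,x)+yK(z,x)\bigr)$. Hence $\varphi\circ\psi$ is a quartic with its triple point at $p_2$ and simple points at $p_1,p_3$. The statement, read literally, predicts multiplicity $m_1+\varepsilon=3$ at $p_1$. So no argument proves the per-point claim for arbitrary involutions. In general $p_i$ gets multiplicity $m_{\pi(i)}+\varepsilon$ for some involutive permutation $\pi$ of $\{1,2,3\}$; only the degree and the multiset of multiplicities are always as stated.

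The repair is to make the matching $\tilde\psi^*E_i=H-E_j-E_k$ an explicit hypothesis. For three proper points this says: $\psi$ contracts the line $\overline{p_jp_k}$ onto $p_i$.
\begin{itemize}
\item It holds for the involutions of Lemma~\ref{involutory}, which are the ones the paper actually uses: it is true for $\sigma,\rho,\tau$ and preserved by conjugation.
\item It is automatic when the base points are as in case (2) or (3) of that lemma. $\tilde\psi$ must carry the three irreducible curves orthogonal to $2H-E_1-E_2-E_3$ onto the components of the exceptional locus of $S\to\PP^2$, preserving self-intersections and incidences. In those configurations this forces the natural indexing, which also disposes of the obstacle you anticipated for $\rho$ and $\tau$ without chart computations.
\end{itemize}
With this hypothesis you do not need the reduction to normal forms at all. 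The rest of your argument is fine: the general member is irreducible, the coefficients of its strict-transform class are its multiplicities, and the automorphism $\tilde\psi$ transports $p_4,\ldots,p_r$. Just drop the claim that $p_4,\ldots,p_r$ lie off the contracted curves; it need not be true and is not needed.
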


\begin{proof}
Cf.\ Proposition 4.2.5 in \cite{MAC}.
\end{proof}

\section{Enriched weighted proximity graphs}\label{sec:weighted}

\begin{definition}\label{def weighted_proximity_graph}
Let $\varphi$ be a plane Cremona map. Let us associate to $\varphi$ a \emph{weighted digraph} $G_\varphi$, called the \emph{weighted proximity graph of (the base points of) $\varphi$}, defined as follows:
\begin{itemize}
\item the vertices of $G_\varphi$ are the base points $p_1,\ldots,p_r \in \BB(\PP^2)$ of $\varphi$;
\item there is an arrow $p_i \to p_j$ if and only if $p_i$ is proximate to $p_j$;
\item each vertex $p_i$ is weighted with the multiplicity $m_i=\mult_{p_i}(\varphi)$ of $\varphi$ at $p_i$.
\end{itemize}
\end{definition}

\begin{example}
Let $\sigma$, $\rho$ and $\tau$ be the quadratic maps defined in \eqref{sigma}, \eqref{rho} and \eqref{tau}. Their respective proximity graphs $G_\sigma$, $G_\rho$ and $G_\tau$ are:
\begin{center}
$G_\sigma$ =
\scalebox{0.4}{%
\begin{tikzpicture}[->,>=stealth',shorten >=2pt,auto,node distance=2.5cm,ultra thick,baseline=-1.25ex]
  \tikzstyle{every state}=[fill=white,draw=black,text=black]
  \node[state,draw=red,text=red] (A)                    {\huge 1};
  \node[state,draw=red,text=red] (B) [right of=A] {\huge 1};
  \node[state,draw=red,text=red] (C) [right of=B] {\huge 1};
  \end{tikzpicture}%
} \qquad
$G_\rho$ =
\scalebox{0.4}{%
\begin{tikzpicture}[->,>=stealth',shorten >=2pt,auto,node distance=2.5cm,ultra thick,baseline=-1.25ex]
  \tikzstyle{every state}=[fill=white,draw=black,text=black]

  \node[state,draw=red,text=red] (A)                    {\huge 1};
  \node[state] (B) [right of=A] {\huge 1};
  \node[state,draw=red,text=red] (C) [right of=B] {\huge 1};
  \path (B) edge (A);
\end{tikzpicture}%
} \qquad
$G_\tau$ =
\scalebox{0.4}{%
\begin{tikzpicture}[->,>=stealth',shorten >=2pt,auto,node distance=2.5cm,ultra thick,baseline=-1.25ex]
  \tikzstyle{every state}=[fill=white,draw=black,text=black]

  \node[state,draw=red,text=red] (A)                    {\huge 1};
  \node[state] (B) [right of=A] {\huge 1};
  \node[state] (C) [right of=B] {\huge 1};

\path (C) edge (B);
\path (B) edge (A);
\end{tikzpicture}%
}
\end{center}
\end{example}

\begin{notation}
When we draw the weighted proximity graph of a plane Cremona map $\varphi$, for readers' convenience we write in \textcolor{red}{red} the vertices corresponding to \emph{proper base points} and in \textcolor{black}{black} the vertices corresponding to \emph{infinitely near points}.
\end{notation}

In \cite{CN} we proved that there are exactly 21 weighted proximity graphs of plane Cremona maps of degree 3.
Recall that in \cite{CN} we enriched the weighted proximity graphs of plane Cremona maps of degree 3 by adding the line passing through three base points:
we found exactly 31 enriched weighted proximity graphs of plane Cremona maps of degree 3.

\begin{example}
The weighted proximity graph of a plane Cremona map of degree 4 and length 1 has one vertex of multiplicity 3 and six vertices of multiplicity 1.
\end{example}

\begin{remark}\label{rem:line}
Note that, if $\varphi$ has degree 4 and length 1, then:
\begin{itemize}
  \item there is no irreducible conic through the seven base points of $\varphi$;
  \item there is no line through five (or more) base points of $\varphi$;
  \item there is no line through the base point of multiplicity 3 and other two points of multiplicity 1 of $\varphi$.
\end{itemize}
\end{remark}

\begin{definition}
Let us add to the weighted proximity graph $G_\varphi$ of a plane Cremona map $\varphi$ of degree 4 and length 1 the following data about lines and conics:
\begin{itemize}
    \item the list of lines passing through three or four base points of $\varphi$;
    \item the list of conics passing through six base points of $\varphi$.
\end{itemize}
Let us call this object the \emph{enriched weighted proximity graph} of $\varphi$.
\end{definition}

In \cite{thesis}, the second author proved that:
\begin{itemize}
    \item there are exactly 90 weighted proximity graphs of plane Cremona maps of degree 4 and length 1, up to isomorphism;
    \item there are exactly 449 enriched weighted proximity graphs of plane Cremona maps of degree 4 and length 1, up to isomorphism.
\end{itemize}

In this paper we are interested in computing lower and upper bounds for the quadratic length of plane Cremona maps of degree 4, so we do not put here the whole list of 449 enriched weighted proximity graphs.

\section{Quadratic length of plane Cremona maps of degree 4}
\label{sec:height}

Let us first recall some very well-known facts.

\begin{lemma}\label{cor5.14}\label{lem5.15}
A plane Cremona map $\varphi$ of degree $d\leq5$ and de Jonquières has $\q(\varphi) \geq d-1$.
A plane Cremona map $\varphi$ of degree $d\geq5$ has $\q(\varphi) \geq 3$.
\end{lemma}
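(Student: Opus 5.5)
The plan is to obtain both lower bounds by listing every homaloidal type that a composition of at most two or three quadratic maps can have. The basic tool is Proposition \ref{PropMAC118b}.

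First I reduce to involutory quadratic maps. Let $\psi'$ be any quadratic map with base points $q_1,q_2,q_3$, and let $\psi$ be the involutory quadratic map with the same base points given by Lemma \ref{involutory}. Applying Proposition \ref{PropMAC118b} to $\psi'\circ\psi$ gives $\varepsilon=2-3=-1$, so $\psi'\circ\psi$ has degree $1$. Hence $\psi'=\alpha\circ\psi$ with $\alpha\in\Aut(\PP^2)$. By induction, and since equivalent maps have the same quadratic length, any composition of $n$ quadratic maps is equivalent to a composition $\psi_n\circ\cdots\circ\psi_1$ of involutory ones, up to automorphisms in between.

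Now write $\varphi_k=\psi_k\circ\cdots\circ\psi_1$. Then $\varphi_{k+1}=(\psi_{k+1}\circ\varphi_k^{-1})^{-1}$. I find it easiest to track $\varphi_k^{-1}$, or equivalently to use Remark \ref{rem:inverse} and compose on the right. By Proposition \ref{PropMAC118b}, if $\varphi_k$ has degree $d$ and $s$ is the sum of its multiplicities at the three base points of the next involution, then the new map has degree $2d-s$. Its multiplicities are $m_i+\varepsilon$ at those three points, with $\varepsilon=d-s$, and they are unchanged at the other base points.

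From this I enumerate the possible types.
\begin{itemize}
\item One quadratic map gives type $(2;1^3)$.
\item Two quadratic maps give $s\in\{0,1,2,3\}$. The outcomes are type $(4;2^3,1^3)$ for $s=0$, type $(3;2,1^4)$ for $s=1$, degree $2$ for $s=2$, and degree $1$ for $s=3$. For $s=0$ the three old points get multiplicity $1+2=3$? No: they are the involution's base points with $m_i=0$, so they get $\varepsilon=2$, while the old points keep multiplicity $1$, giving $(4;2^3,1^3)$. For $s=1$ the shared point gets $1+1=2$, the two new points get $1$, and the two old unshared points keep $1$, giving $(3;2,1^4)$.
\end{itemize}

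This already proves the second assertion and the cases $d\le4$ of the first. With at most two quadratic maps the degree is at most $4$, so any map of degree $d\geq5$ has $\q\geq3$. For $d=2$ the bound is trivial, and for $d=3$ we need $\q\geq2$, which holds because the degree is not $2$. For $d=4$, the only degree-$4$ type arising from two quadratic maps is $(4;2^3,1^3)$, which is not de Jonquières, so $\q\geq3$.

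For $d=5$ de Jonquières I must show that three quadratic maps never produce type $(5;4,1^8)$. A degree-$5$ map must arise from a degree-$d$ map on the list with $2d-s=5$. That means either $d=4,\ s=3$ from $(4;2^3,1^3)$, or $d=3,\ s=1$ from $(3;2,1^4)$; the cases $d\le2$ give degree at most $4$.
\begin{itemize}
\item In the first case $\varepsilon=1$. All multiplicities were at most $2$, so the new ones are at most $3$.
\item In the second case $\varepsilon=2$ and $s=1$. The double point cannot be among the three chosen points (that would force $s\geq2$). So the double point keeps multiplicity $2$ and each chosen point gets at most $1+2=3$.
\end{itemize}
In neither case does a point of multiplicity $4$ appear, so $\q\geq4$.

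The only delicate step is the bookkeeping in Proposition \ref{PropMAC118b}: tracking which base points of the next involution coincide with base points of the current map, including infinitely near ones. Everything else is this finite enumeration.
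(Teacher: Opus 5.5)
Your argument is correct. The paper does not actually prove this lemma; it only cites Corollary~5.14 and Lemma~5.15 of \cite{CN}. Your enumeration is therefore a self-contained substitute, relying only on Lemma~\ref{involutory} and Proposition~\ref{PropMAC118b} from this paper.

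It establishes two facts. Compositions of at most two quadratic maps are either automorphisms or have type $(2;1^3)$, $(3;2,1^4)$ or $(4;2^3,1^3)$. A third quadratic map reaches degree $5$ only via $s=3$ on $(4;2^3,1^3)$ or $s=1$ on $(3;2,1^4)$, and in both cases the maximal multiplicity is at most $3$. What your route buys is transparency: both bounds come purely from multiplicity bookkeeping, independent of the finer classification in \cite{CN}.

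A few blemishes should be fixed in a write-up.
\begin{enumerate}
\item The identity $\varphi_{k+1}=(\psi_{k+1}\circ\varphi_k^{-1})^{-1}$ is false as written, since the right-hand side equals $\varphi_k\circ\psi_{k+1}$. The correct relation is $\varphi_{k+1}^{-1}=\varphi_k^{-1}\circ\psi_{k+1}$. Simpler still, every $\psi_n\circ\cdots\circ\psi_1$ is obtained from $\psi_n$ by successive composition on the right with $\psi_{n-1},\ldots,\psi_1$. So Proposition~\ref{PropMAC118b} applies directly, and no inverses are needed.
\item Proposition~\ref{PropMAC118b} assumes $d>1$, so a degree-$1$ intermediate map should be dismissed separately. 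This is trivial: the next composition is then just a quadratic map.
\item Delete the leftover ``$1+2=3$? No:'' self-correction.
\end{enumerate}
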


\begin{proof}
See Corollary 5.14 and Lemma 5.15 in \cite{CN}.
\end{proof}

\begin{lemma}\label{lem:5.4}
Let $\varphi$ be a plane Cremona map of degree 4 and length 1.
Suppose that $\varphi$ has at least one proper base point of multiplicity 1.
Then, $\q(\varphi) \leq 4$. 
\end{lemma}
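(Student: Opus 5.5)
The plan is to lower the degree by one using a single involutory quadratic map, and then invoke Theorem \ref{thm3.1}. Let $p_0$ be the base point of multiplicity $3$ of $\varphi$, and let the other six base points have multiplicity $1$.

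First, $p_0$ is a proper point of $\PP^2$. Indeed, if $p_0$ were infinitely near some point $p$, then $p$ would be a base point with multiplicity at least $\mult_{p_0}(\varphi)=3$, by the proximity inequalities. But $p_0$ is the only base point of multiplicity at least $3$, which is a contradiction.

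Next we pick three points $q_1,q_2,q_3$: namely $p_0$ together with two simple base points, chosen so that Lemma \ref{involutory} applies. By hypothesis some simple base point $q$ is proper. There are two cases.
\begin{itemize}
\item[(a)] Suppose all simple base points are proper. Take $q$ and any other simple base point $r$. By Remark \ref{rem:line}, no line passes through $p_0$ and two simple base points, so $p_0,q,r$ are not collinear. We are then in case (1) of Lemma \ref{involutory}.
\item[(b)] Suppose some simple base point is infinitely near. Choose one, $r$, of minimal infinitesimal order, so that $r\infnear_1 q'$ for some proper base point $q'$. If $q'=p_0$, use the points $p_0,q,r$. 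If $q'$ is simple, use $q',p_0,r$. In both subcases we are in case (2) of Lemma \ref{involutory}.
\end{itemize}
In case (2) we must also check that $r$ is not the direction of the line joining the two proper points. Otherwise that line would pass through $p_0$ and two simple base points, again contradicting Remark \ref{rem:line}. Checking these configurations carefully is the main obstacle, and it is settled by the remark together with the fact that $p_0$ is proper.

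Let $\psi$ be the involutory quadratic map based at the three chosen points. By Proposition \ref{PropMAC118b} we have
$$\varepsilon = 4-3-1-1=-1 .$$
Hence $\varphi\circ\psi$ has degree $3$. Its base points are $p_0$ with multiplicity $2$ and the remaining four simple points. So $\varphi\circ\psi$ is a plane Cremona map of degree $3$, and Theorem \ref{thm3.1} gives $\q(\varphi\circ\psi)\le 3$. Since $\psi^2=\id$, we have $\varphi=(\varphi\circ\psi)\circ\psi$. Therefore $\q(\varphi)\le 3+1=4$.
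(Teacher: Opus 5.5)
Your proof is correct and follows the paper's argument. You compose with an involutory quadratic map based at the triple point and two simple base points (non-collinear by Remark \ref{rem:line}) to reach degree $3$, then apply Theorem \ref{thm3.1}; your case split is a reorganization of the paper's three cases, and you also make explicit two points the paper leaves implicit, namely that the triple point is proper and that the direction condition in case (2) of Lemma \ref{involutory} holds.
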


\begin{proof}
Let $p_1$ be the base point of multiplicity 3 of $\varphi$ and let $p_2$ be a proper base point of multiplicity 1 of $\varphi$, that exists by the assumption.
Then, there are three possibilities:
\begin{itemize}
    \item there exists another proper base point $p_3$ (of multiplicity 1) of $\varphi$;
    \item there exists a base point $p_3 \infnear_1 p_1$;
    \item there exists a base point $p_3 \infnear_1 p_2$.
\end{itemize}
In all three cases, $p_1, p_2, p_3$ cannot be collinear, hence there exists an involutory quadratic map $\psi$ based at $p_1, p_2, p_3$ by Lemma \ref{involutory} and $\varphi\circ\psi$ is a plane Cremona map of degree 3 by Proposition \ref{PropMAC118b}.
By Theorem \ref{thm3.1}, one has that $\q(\varphi\circ\psi)\leq 3$. We conclude that $\q(\varphi)\leq 4$.
\end{proof}

\begin{lemma}\label{lem:5.5}
Let $\varphi$ be a plane Cremona map of degree 4 and length 1.
Let $p_1\in\PP^2$ be the base point of multiplicity 3 of $\varphi$.
Suppose that there exist two other base points $p_2, p_3$ of $\varphi$ such that $p_3\infnear_1 p_2 \infnear_1 p_1$ and $p_3$ is not proximate to $p_1$.
Then, $\q(\varphi) \leq 4$. 
\end{lemma}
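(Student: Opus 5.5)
The plan mirrors the proof of Lemma \ref{lem:5.4}: we compose $\varphi$ with an involutory quadratic map based at $p_1,p_2,p_3$, which drops the degree to 3, and then apply Theorem \ref{thm3.1}.

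First we check that $p_1,p_2,p_3$ satisfy case $(3)$ of Lemma \ref{involutory}. We have $p_3\infnear_1 p_2\infnear_1 p_1\in\PP^2$. By hypothesis $p_3$ is not proximate to $p_1$, so in particular $p_3\notsatel p_1$. Lemma \ref{involutory} then gives an involutory quadratic map $\psi$ based at $p_1,p_2,p_3$.

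Next we verify that these three points impose no collinearity obstruction, i.e.\ that $\varepsilon$ in Proposition \ref{PropMAC118b} is what we expect. The multiplicities of $\varphi$ at $p_1,p_2,p_3$ are $3,1,1$. Indeed $p_1$ is the unique point of multiplicity $3$, and $p_2,p_3$ are base points distinct from $p_1$, hence of multiplicity $1$. Therefore
\[
\varepsilon = 4-3-1-1=-1 .
\]
By Proposition \ref{PropMAC118b}, $\varphi\circ\psi$ has degree $3$. Its multiplicities are $2$ at $p_1$, $0$ at $p_2$ and $p_3$, and the remaining four simple base points are unchanged, so the homaloidal type is $(3;2,1^4)$. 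This is consistent with \eqref{eq:Cremona}.

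The only point needing care is that Proposition \ref{PropMAC118b} applies with $p_2,p_3$ as base points of $\psi$, and that the infinitely near configuration is legitimate. The condition that $p_3$ is not satellite to $p_1$ is exactly what excludes the case where no quadratic map is based at these points. The proximity inequality at $p_1$ is also fine, since $\varphi$ has multiplicity $3\ge 1+0$ there, given that $p_3$ is not proximate to $p_1$. I expect this to be routine.

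Finally, Theorem \ref{thm3.1} gives $\q(\varphi\circ\psi)\le 3$. Since $\psi^{-1}=\psi$, we can write $\varphi=(\varphi\circ\psi)\circ\psi$, which exhibits $\varphi$ as a composition of at most $3+1=4$ quadratic maps. Hence $\q(\varphi)\le 4$.
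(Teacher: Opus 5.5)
\textbf{Gap: non-collinearity of $p_1,p_2,p_3$.} Your strategy is the paper's: compose with an involutory quadratic map $\psi$ based at $p_1,p_2,p_3$, drop to degree $3$, and apply Theorem~\ref{thm3.1}. However, one hypothesis is not justified, and your justification for it is wrong.

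Lemma~\ref{involutory} presupposes that \emph{some} quadratic map is based at $p_1,p_2,p_3$. In the configuration $p_3\infnear_1 p_2\infnear_1 p_1$ this needs two conditions:
\begin{enumerate}
\item $p_3\notsatel p_1$, which you obtain from the hypothesis;
\item $p_1,p_2,p_3$ are not collinear, which you do not check.
\end{enumerate}
Your sentence ``the condition that $p_3$ is not satellite to $p_1$ is exactly what excludes the case where no quadratic map is based at these points'' is false. Let $p_2$ be the direction of a line $L$ through $p_1$, and let $p_3$ be the point of the first neighbourhood of $p_2$ lying on the strict transform of $L$. The strict transform of $L$ is transversal to $E_1$ at $p_2$, so it meets $E_2$ away from the strict transform of $E_1$; hence $p_3$ is not proximate to $p_1$. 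Yet every conic through $p_1,p_2,p_3$ meets $L$ with multiplicity $\ge 3>2$, so it contains $L$. Thus no quadratic map is based at these three points.

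Your paragraph on ``no collinearity obstruction'' does not address this either. Computing $\varepsilon=-1$ via Proposition~\ref{PropMAC118b} already assumes that $\psi$ exists and is based at $p_1,p_2,p_3$.

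\textbf{The fix} is one line, and it is exactly what the paper's proof states: $p_1,p_2,p_3$ cannot be collinear. A line through them would meet a general quartic of the homaloidal net of $\varphi$ with intersection multiplicity at least $3+1+1=5>4$. It would therefore be a fixed component of the net, which is impossible; this is the third item of Remark~\ref{rem:line}.

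With this added, the rest of your argument goes through and coincides with the paper's: $\deg(\varphi\circ\psi)=4+\varepsilon=3$, then $\q(\varphi\circ\psi)\le 3$, and $\varphi=(\varphi\circ\psi)\circ\psi$.
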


\begin{proof}
Note that $p_1, p_2, p_3$ cannot be collinear.
Therefore, there exists an involutory quadratic map $\psi$ based at $p_1, p_2, p_3$ and we may conclude as in the proof of the previous lemma.
\end{proof}

\begin{lemma}
Let $\varphi$ be a plane Cremona map of degree 4 and length 1.
Let $p_1\in\PP^2$ be the base point of multiplicity 3 of $\varphi$.
Suppose that the hypotheses of Lemmas \ref{lem:5.4} and \ref{lem:5.5} are not fulfilled, namely all other base points of $\varphi$ are infinitely near $p_1$ and there do not exist $p_i, p_j$ among them such that $p_j\infnear_1 p_i \infnear_1 p_1$ and $p_j$ is not proximate to $p_1$.
Then, the enriched weighted proximity graph of $\varphi$ is one of the following three graphs:
\begin{align}
&\scalebox{0.5}{%
\begin{tikzpicture}[->,>=stealth',shorten >=2pt,auto,node distance=2.5cm,ultra thick,baseline=0]
  \tikzstyle{every state}=[fill=white,draw=black,text=black]
  \node[state,draw=red,text=red] (A)    {\huge 3};
  \node[state] (B) [right of=A] {\huge 1};
  \node[state] (C) [right of=B] {\huge 1};
  \node[state] (D) [right of=C] {\huge 1};
  \node[state] (E) [right of=D] {\huge 1};
  \node[state] (F) [right of=E] {\huge 1};
  \node[state] (G) [right of=F] {\huge 1};
\path (B) edge (A);
\path (C) edge (B);
\path (D) edge (C);
\path (E) edge (D);
\path (F) edge (E);
\path (G) edge (F);
\path (C) edge [bend right] (A);
\path (D) edge [bend right] (A);
\addvmargin{1mm}
\end{tikzpicture}%
}
\label{graph1.1} \\
&\scalebox{0.5}{%
\begin{tikzpicture}[->,>=stealth',shorten >=2pt,auto,node distance=2.5cm,ultra thick,baseline=0]
  \tikzstyle{every state}=[fill=white,draw=black,text=black]
  \node[state,draw=red,text=red] (A)    {\huge 3};
  \node[state] (B) [right of=A] {\huge 1};
  \node[state] (C) [right of=B] {\huge 1};
  \node[state] (D) [right of=C] {\huge 1};
  \node[state] (E) [right of=D] {\huge 1};
  \node[state] (F) [right of=E] {\huge 1};
  \node[state] (G) [right of=F] {\huge 1};
\path (B) edge (A);
\path (C) edge (B);
\path (D) edge (C);
\path (E) edge (D);
\path (F) edge (E);
\path (G) edge (F);
\path (C) edge [bend right] (A);
\addvmargin{1mm}
\end{tikzpicture}%
}
\label{graph2.1} \\
&\scalebox{0.5}{%
\begin{tikzpicture}[->,>=stealth',shorten >=2pt,auto,node distance=2.5cm,ultra thick,baseline=0]
  \tikzstyle{every state}=[fill=white,draw=black,text=black]
  \node[state,draw=red,text=red] (A)    {\huge 3};
  \node[state] (B) [right of=A] {\huge 1};
  \node[state] (C) [right of=B] {\huge 1};
  \node[state] (D) [right of=C] {\huge 1};
  \node[state] (E) [right of=D] {\huge 1};
  \node[state] (F) [right of=E] {\huge 1};
  \node[state] (G) [right of=F] {\huge 1};
\path (B) edge (A);
\path (D) edge (C);
\path (E) edge (D);
\path (F) edge (E);
\path (G) edge (F);
\path (C) edge [bend right] (A);
\path (D) edge [bend right] (A);
\addvmargin{1mm}
\end{tikzpicture}%
}
\label{graph4.1}
\end{align}
\end{lemma}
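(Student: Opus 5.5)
We will argue combinatorially on the tree of infinitely near points at $p_1$, using the proximity inequalities together with the constraints of Remark \ref{rem:line}. Let $q_1,\dots,q_6$ be the simple base points, all infinitely near $p_1$.

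\textbf{Step 1.} We show that at most one simple base point lies in the first neighbourhood of $p_1$. If there were two distinct such points $q,q'$, consider the proximity inequality at $p_1$: the multiplicity $3$ must be at least the sum of the multiplicities of the points proximate to $p_1$. This leaves room for a third point proximate to $p_1$ but is not by itself a contradiction. We therefore look at the remaining points instead.
\begin{itemize}
\item By hypothesis, any point in the first neighbourhood of $q$ (or of $q'$) must be proximate to $p_1$, hence satellite.
\item A point in the first neighbourhood of $q$ can have at most one further satellite point, so the chains through $q$ and $q'$ would be short.
\item Then either the proximity inequality at $p_1$ fails, since more than three simple points would be proximate to it, or there are too few points to reach six.
\end{itemize}
A simple case count should close this. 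We also use Remark \ref{rem:line}: the line through $p_1$ in the direction $q$ passes through $p_1$ and $q$, so it cannot contain a further base point proximate to both.

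\textbf{Step 2.} Write $q_1\infnear_1 p_1$ for the unique first-order point. Every point in the first neighbourhood of $q_1$ is then proximate to $p_1$, so it is satellite to $p_1$, and it is unique, say $q_2$.
\begin{itemize}
\item Following the chain, a point in the first neighbourhood of $q_2$ is either proximate to $p_1$ (again satellite) or not.
\item The proximity inequality at $p_1$ allows at most three simple points proximate to it: $q_1$, $q_2$, and possibly $q_3$.
\item The proximity inequality $1\ge\sum$ at each simple point forces a chain structure. Each simple point has at most one point proximate to it, apart from a satellite that is also proximate to an earlier point; this produces the case $q_3\satel p_1$ with $q_3$ being first-order to $q_2$.
\end{itemize}
Since the points must all be in first neighbourhoods of a single chain, the candidate graphs are as follows. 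The chain $q_1,\dots,q_6$ has extra arrows to $p_1$ from either $\{q_2,q_3\}$ or $\{q_2\}$; this gives graphs \eqref{graph1.1} and \eqref{graph2.1}.

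\textbf{Step 3: the main obstacle.} The third graph \eqref{graph4.1} has $q_3$ not proximate to $q_2$ yet proximate to $p_1$. This corresponds to the branching where $q_2$ and $q_3$ are both satellites to $p_1$ from different infinitely near points, so the configuration is not a pure chain. Here we must carefully enumerate the possible ways a point can be satellite to $p_1$ without being in the first neighbourhood of the previous point, and check each against the proximity inequalities.

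\textbf{Step 4.} Finally, we verify the enrichment data. We show that no line or conic passes through the required numbers of base points, except as forced. By Remark \ref{rem:line}, a line through $p_1$ contains at most one more simple point. A conic through six base points would have to pass through $p_1$ with multiplicity at most $1$, and hence through at most three of the points proximate to $p_1$. We use multiplicity counting at $p_1$ to exclude conics, or to identify them as uniquely determined, in each of the three graphs. Standard coordinates can be used to confirm that each graph is realized.
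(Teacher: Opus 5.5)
There is a genuine gap: your Step~1 asserts something false, and it contradicts the statement you are proving. You claim that at most one simple base point lies in the first neighbourhood of $p_1$. But in graph \eqref{graph4.1} the second and third vertices each have a single outgoing arrow, pointing to $p_1$, so $p_2\infnear_1 p_1$ and $p_3\infnear_1 p_1$. Only $p_4\infnear_1 p_3$ is satellite to $p_1$, and $p_7\infnear_1 p_6\infnear_1 p_5\infnear_1 p_4$ are free.

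The flaw is in how you apply the hypothesis. It only restricts base points of order exactly two over $p_1$ (those with $p_j\infnear_1 p_i\infnear_1 p_1$), not all later points. So after the satellite point the chain can continue with free points, and your claim that ``the chains through $q$ and $q'$ would be short'' fails. Step~3 then misreads \eqref{graph4.1} as containing two satellites of $p_1$ coming from different points. It also explicitly postpones the enumeration, so the third graph is never derived and the case of two or more first-order points is not handled at all.

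The missing argument, which is what the paper does, is a case split on the number of base points in the first neighbourhood of $p_1$. It uses only two facts: at most three simple base points are proximate to $p_1$, and a simple base point has at most one base point proximate to it (hence at most one in its first neighbourhood).

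If there is exactly one, $p_2$, these facts force a chain $p_7\infnear_1\cdots\infnear_1 p_2\infnear_1 p_1$. The hypothesis forces $p_3\satel p_1$, and according to whether $p_4$ is proximate to $p_1$ you get \eqref{graph1.1} or \eqref{graph2.1}. No further arrows can occur: $p_1$ is then either saturated or no longer reachable, and each earlier simple point already has its unique proximate point.

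If there are two or more, say $p_2,p_3$, then three or more would saturate the proximity inequality at $p_1$. Yet some base point of order two must exist, since there are six simple points, and it would have to be proximate to $p_1$, a contradiction. With exactly two, some order-two point, say $p_4\infnear_1 p_3$, is proximate to $p_1$, which saturates $p_1$. Then nothing lies over $p_2$, nothing else is proximate to $p_3$, and $p_5,p_6,p_7$ form a free chain over $p_4$; this is \eqref{graph4.1}.

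As a side remark, in Step~4 a conic can have multiplicity $2$ at $p_1$ (e.g.\ a pair of lines). The enrichment is empty anyway, because no smooth branch through $p_1$ passes through a point satellite to $p_1$.
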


\begin{proof}
Recall that the point $p_1$ may have at most three base points of multiplicity 1 that are proximate to $p_1$.
Moreover, a base point of multiplicity 1 cannot have more than one point that is proximate to it.

Suppose that the base points $p_2, \ldots, p_7$ of multiplicity 1 are such that: $p_7 \infnear_1 p_6 \infnear_1 p_5 \infnear_1 p_4 \infnear_1 p_3 \infnear_1 p_2 \infnear_1 p_1$. The assumptions imply that $p_3$ must be proximate to $p_1$.
In case $p_4$ is proximate to $p_1$ too, the enriched weighted proximity graph is isomorphic to \eqref{graph1.1}.
Otherwise, if $p_4$ is not proximate to $p_1$, the enriched weighted proximity graph is isomorphic to \eqref{graph2.1}.

Otherwise, there are at least two base points, say $p_2$ and $p_3$, of multiplicity 1 that are infinitely near of order 1 to $p_1$. Then, either $p_4$ is infinitely near of order 1 to $p_1$ or $p_4$ is infinitely near one among $p_2$ and $p_3$, say $p_3$.

In the former case, $p_5$ must be infinitely near one among $p_2, p_3, p_4$; since $p_5$ cannot be proximate to $p_1$, we get a contradiction to the hypothesis.

In the latter case, $p_4$ must be proximate to $p_1$. The base point $p_5$ cannot be proximate to $p_2$, because in that case $p_5$ would not be proximate to $p_1$, a contradiction to the hypothesis. Therefore, $p_5 \infnear_1 p_4$ and similarly $p_7 \infnear_1 p_6 \infnear_1 p_5$.
We conclude that the enriched weighted proximity graph is isomorphic to \eqref{graph4.1}.
\end{proof}

In the following propositions, we classify plane Cremona maps of degree 4 with the enriched weighted proximity graphs found in the previous lemma.

\begin{proposition}
Let $\varphi$ be a plane Cremona map with enriched weighted proximity graph \eqref{graph1.1}. Then $\varphi$ is equivalent to the map 
\begin{equation}\label{phi1.1}
\varphi_1([x:y:z]) = [xz^3+y^4+ky^2z^2 : yz^3 : z^4],
\end{equation}
where $k$ is either $0$ or $1$.
\end{proposition}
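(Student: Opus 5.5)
The plan is to normalise the base-point configuration by a projective change of coordinates in the source. After that we determine the homaloidal net explicitly, and finally use automorphisms in the target and residual source symmetries to reduce to the stated normal form.

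\emph{Normalising the base points.} Graph \eqref{graph1.1} describes a chain $p_7\infnear_1 p_6\infnear_1\cdots\infnear_1 p_2\infnear_1 p_1$ with $p_1$ proper, in which $p_3$ and $p_4$ are the only points besides $p_2$ proximate to $p_1$. In standard coordinates this means $p_2=(p_1,t_2)$, $p_3=(p_1,t_2,\infty)$ and $p_4=(p_1,t_2,\infty,\infty)$. The points $p_5,p_6,p_7$ are free, so $t_5,t_6,t_7\in\K$ and $p_5\notsatel p_1$. I would apply an automorphism sending $p_1$ to $[0:0:1]$ and the tangent direction $p_2$ to the line $y=0$. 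Working in the affine chart $z=1$, with $(x,y)$ as local coordinates, the chain then corresponds to a branch of the form $y=a x^{4}+bx^5+cx^6+\dots$ truncated appropriately. The strict transforms of curves through the points encode contact conditions with this branch, and the $x^2,x^3$ terms vanish because of the two satellite steps. The proximity pattern forces the curve germ determined by $p_1,\ldots,p_7$ to be of the shape $y^{\,}$-linear against $x^4$, with parameters coming from $t_5,t_6,t_7$. More precisely, I expect to reach something like $x=\lambda y^4+\mu y^5+\nu y^6$ after swapping roles to match the target formula, where the contact order is read off by blowing up.

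\emph{Writing down the net.} A de Jonquières quartic with triple point at $[1:0:0]$ is defined by a net of quartics with a triple point there; equivalently, in the chart $x=1$ these are $A(y,z)+B(y,z)$, with $A$ cubic and $B$ quartic. The conditions that these quartics pass through $p_2,\dots,p_7$ with the given proximities are six linear conditions on the $15$-dimensional space of quartics with that triple point, which has dimension $15-6=9$ after imposing the triple point. Rather than solving this in general, I would check directly that $\varphi_1$ has base points $[1:0:0]$ of multiplicity $3$ together with a chain of six simple points along the curve $x z^3+y^4+ky^2z^2=0$ near $[1:0:0]$. I would also verify that the chain has the proximity graph \eqref{graph1.1}. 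A local parametrisation, for instance $z=y^{4/3}\cdots$, should show that the satellite points come from the $y^4$ cusp-like contact. The uniqueness of the net given its seven base points then follows from the standard fact that a homaloidal net is determined by its base points, since it is the complete linear system of quartics with those multiplicities.

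\emph{Reducing the parameters.} Next I would compute the standard coordinates of the base points of $\varphi_1$ as functions of $k$, and show that a general configuration with graph \eqref{graph1.1} can be moved to one of these. The available freedom is the stabiliser of the flag (the point $p_1$ and the line through $p_2$) in $\PGL(3,\K)$, which has dimension $8-3=5$. Imposing $p_3$ and $p_4$ is automatic, since they are determined satellites. The remaining free parameters $t_2,t_5,t_6,t_7$ number four, and two of them are killed by the choice of line and point, leaving $t_5,t_6,t_7$. The diagonal torus and the unipotent substitutions $x\mapsto x+\alpha y+\beta z$, $y\mapsto y+\gamma z$ should act on $(t_5,t_6,t_7)$ so as to normalise two coordinates to $0$. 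The last coordinate is then rescaled by the torus to either $0$ or $1$, which gives $k\in\{0,1\}$. The target automorphism only changes the basis of the net, so it is absorbed.

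\emph{Main obstacle.} The hardest step is the explicit translation between standard coordinates of $p_5,p_6,p_7$ and the coefficients of the equation. In particular, one must verify that the group action really has only the two orbits $k=0$ and $k\ne0$, and that no further invariant survives. I would attack this by parametrising the branch through the chain as $x=-(y^4+ky^2+\dots)$ in the chart $z=1$, blowing up seven times, and tracking how the substitutions $y\mapsto sy$, $x\mapsto s^4x+\dots$ act on the coefficients.
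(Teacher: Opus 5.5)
Your reduction is the same as the paper's. The homaloidal net is the complete system of quartics with these multiplicities, so it suffices to move the base points of $\varphi$ onto those of $\varphi_1$ by an automorphism fixing $p_1$ and the direction $p_2$, after which $p_3,p_4$ come for free. The gap is that the step carrying all the content is only asserted (``should act''): that this stabiliser acts on the positions of $p_5,p_6,p_7$ with exactly two orbits, represented by $k=0$ and $k=1$.

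Moreover, the local model you propose for proving it is wrong. A smooth branch $y=ax^4+bx^5+\cdots$ without $x^2,x^3$ terms passes through the \emph{free} points $(p_1,0,0)$ and $(p_1,0,0,0)$, so it produces no satellites. Since $p_2,p_3,p_4$ are all proximate to $p_1$ and $3=m_1=m_2+m_3+m_4$, the general member of the net is, at $p_1$, a single branch of multiplicity $3$ with characteristic exponent $4/3$. Concretely, take $p_1=[1:0:0]$ with $p_2$ in the direction $z=0$. In the chart $x=1$ the branch is $y=s^3$, $z=a_1s^4+a_2s^5+a_3s^6+\cdots$ with $a_1\neq0$. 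Blowing up gives $t_5=a_1^{-3}$; then, for $a_1=-1$, $t_6=3a_2$, and $t_7=-3a_3$ once $a_2=0$.

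Your parametrisation $x=-(y^4+ky^2+\cdots)$ lives in the chart $z=1$, which does not contain $p_1=[1:0:0]$. There are also two smaller errors:
\begin{itemize}
\item $p_4\prox p_1$ has standard coordinates $(p_1,t_2,\infty,0)$; the point $(p_1,t_2,\infty,\infty)$ is proximate to $p_2$ instead.
\item $t_5\neq0$, so one cannot ``normalise two coordinates to $0$''. The correct normal form is $t_5=-1$, $t_6=0$, $t_7=k$.
\end{itemize}

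In the correct model the missing computation is short. Take $g=[x+uy+vz: by+wz: ez]$. The parameters $u,v$ enter the expansion only from $s^7$ on, so they do not move $p_5,p_6,p_7$. This is why the naive count ($5$-dimensional stabiliser, three parameters) does not by itself decide the orbit structure. The remaining parameters act by
\[
a_1\mapsto eb^{-4/3}a_1,\qquad a_2\mapsto eb^{-5/3}\bigl(a_2-\tfrac{4w}{3b}a_1^2\bigr).
\]
Hence $e$ normalises $a_1=-1$, and $w$ kills $a_2$ (this uses $a_1\neq0$). The residual group $w=0$, $e=b^{4/3}$ acts by $a_3\mapsto b^{-2/3}a_3$, leaving exactly the two orbits $a_3=0$ and $a_3\neq0$. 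Since $z^3+y^4+ky^2z^2=0$ expands as $z=-y^{4/3}-\tfrac{k}{3}y^2+\cdots$, these orbits are $k=0$ and $k=1$. This is precisely what the paper's automorphisms $\alpha,\beta,\gamma$ do one point at a time; for instance, $w=\tfrac14 b\,t_6$ is the $\tfrac14 br$ appearing in its $\beta^{-1}$.
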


\begin{proof}
Notice that $\varphi_1$ has the following base points:
\begin{itemize}
    \item $p_1$ of multiplicity 3 at $[1:0:0]$;
    \item $p_2\infnear_1 p_1$ with standard coordinates $(p_1,0)$, namely in the direction of the line $z=0$;
    \item $p_3 \infnear_1 p_2$ is proximate to $p_1$, so that it has standard coordinates $(p_1,0,\infty)$;
    \item $p_4\infnear_1 p_3$ is proximate to $p_1$ too, so that it has standard coordinates $(p_1,0,\infty,0)$;
    \item $p_5\infnear_1 p_4$ has standard coordinates $(p_1,0,\infty,0,-1)$;
    \item $p_6\infnear_1 p_5$ has standard coordinates $(p_1,0,\infty,0,-1,0)$;
    \item $p_7\infnear_1 p_6$ has standard coordinates $(p_1,0,\infty,0,-1,0,k)$.
\end{itemize}
It suffices to show that, if $\varphi$ has enriched weighted proximity graph \eqref{graph1.1}, then there exists an automorphism of $\PP^2$ mapping the base points $q_1,\ldots,q_7$ of $\varphi$ to the points with the above standard coordinates.

Indeed, automorphisms $\alpha$ of $\PP^2$ that fix $p_1,p_2,p_3,p_4$ and send $q_5$ with standard coordinates $(p_1,0,\infty,0,s)$ ($s\in\K$ and $s\ne0$) to $p_5$ with standard coordinates $(p_1,0,\infty,0,-1)$ are such that
\[
\alpha^{-1}([x:y:z]) = [ay + cz + x: by + dz: (-sb)^{1/3}bz]
\]
for some $a,b,c,d\in\K$ and $b\ne0$.

Automorphisms $\beta$ of $\PP^2$ that fix $p_1,p_2,p_3,p_4,p_5$ and send $q_6$ with standard coordinates $(p_1,0,\infty,0,-1,r)$ ($r\in\K$) to $p_6$ with standard coordinates $(p_1,0,\infty,0,-1,0)$ are such that
\[
\beta^{-1}([x:y:z]) = [ay + cz + x: by + 1/4brz: b^{4/3}z]
\]
for some $a,b,c\in\K$ and $b\ne0$.

Automorphisms $\gamma$ of $\PP^2$ that fix $p_1,p_2,p_3,p_4,p_5,p_6$ and send $q_7$ with standard coordinates $(p_1,0,\infty,0,-1,0,t)$ (where $t\in\K$ and $t\neq 0$) to $p_7$ with standard coordinates $(p_1,0,\infty,0,-1,0,1)$ are such that
\[
\gamma^{-1}([x:y:z]) =[ay + cz + x: t^{3/2}y: {(t^{3/2})}^{4/3}z]
\]
for some $a,c\in\K$.

Moreover, one can check that there is no automorphism that fixes $p_1,p_2,p_3,p_4,p_5,p_6$ and sends the point who has standard coordinate $(p_1,0,\infty,0,-1,0,0)$ to the point with standard coordinate $(p_1,0,\infty,0,-1,0,1)$.
\end{proof}

\begin{proposition}
Let $\varphi$ be a plane Cremona map with enriched weighted proximity graph \eqref{graph2.1}. Then $\varphi$ is equivalent to the map 
\begin{equation}\label{phi2.1}
\varphi_2([x:y:z]) = [y(xz^2 + y^3) : z(xz^2 + y^3) : z^4]. 
\end{equation}
\end{proposition}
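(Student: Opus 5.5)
The plan mirrors the proof of the previous proposition. It has three steps: compute the base points of $\varphi_2$, use the automorphisms of $\PP^2$ to normalize the base points of an arbitrary $\varphi$ with graph \eqref{graph2.1} to those of $\varphi_2$, and conclude that the two maps are equivalent.

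\textbf{Base points of $\varphi_2$.} Work in the chart $x=1$ with affine coordinates $(y,z)$. The linear system $\langle y(z^2+y^3),\, z(z^2+y^3),\, z^4\rangle$ has a triple point at $p_1=[1:0:0]$. All the cubic terms are divisible by $z$ except $y\cdot z^2$, and the tangent cones are $yz^2$, $z^3$ and (the quartic) $z^4$. So the general member has tangent cone containing $z^2$, and $p_2\infnear_1 p_1$ lies in the direction $z=0$, i.e.\ $p_2=(p_1,0)$.

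Next, blow up with $y=x_1$, $z=x_1y_1$. The strict transform of the general member passes through the point $y_1=\infty$ of $E_2$, which lies on the strict transform of $E_1$. Hence $p_3=(p_1,0,\infty)$ is satellite to $p_1$, as required by the bend-right arrow $p_3\to p_1$.

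One more blow-up should give $p_4$ free, i.e.\ not proximate to $p_1$. This comes from the cuspidal cubic $z^2+y^3$, whose branch has the characteristic sequence of an ordinary cusp: $p_2,p_3$ with $p_3$ satellite, and the next point free. The remaining points $p_5,p_6,p_7$ then follow along a curvilinear chain, essentially determined by the contact with the cusp and with the conic-like terms. I will record each $p_i$ in standard coordinates, say $(p_1,0,\infty,t_4,t_5,t_6,t_7)$ with explicit values.

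\textbf{Normalization of an arbitrary $\varphi$.} Let $\varphi$ have graph \eqref{graph2.1}, with base points $q_1,\dots,q_7$. After an automorphism we may assume $q_1=p_1$ and $q_2=p_2$; then $q_3=p_3$ is forced, since satellite points are unique. The point $q_4$ has standard coordinates $(p_1,0,\infty,s)$ with $s\neq 0$, because $q_4$ is not proximate to $p_1$ or to $p_2$. I use the stabilizer of $p_1,p_2,p_3$, namely maps
\[
[x:y:z]\mapsto [x+ay+cz : by+dz : ez],
\]
and compute how it acts on the successive coordinates $t_4,\dots,t_7$.

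The torus part $(b,e)$ rescales $t_4$, so we can normalize $t_4$. The unipotent parameters $a,c,d$, together with the leftover one-dimensional torus, should then kill or normalize $t_5$, $t_6$ and $t_7$. I expect $\varphi_2$ to have no modulus, so that the whole $4$--$5$ dimensional group acts transitively on admissible configurations. As in the previous proof, I will write the explicit automorphisms achieving each normalization step.

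\textbf{Conclusion.} Once the base points agree, $\varphi$ and $\varphi_2$ are defined by the same homaloidal net, because a de Jonquières net of degree 4 is determined by its base points. So they differ by an automorphism of the target, and $\varphi$ is equivalent to $\varphi_2$.

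\textbf{Main obstacle.} The hard part is the bookkeeping of which coordinates $t_i$ each stabilizer parameter affects. I need to check that there are enough degrees of freedom: $t_5,t_6,t_7$ must all be normalizable, and there must be no invariant analogous to $k$ in \eqref{phi1.1}. If a modulus does appear, I will use the constraints that there is no irreducible conic through the seven points and no forbidden lines (Remark~\ref{rem:line}) to exclude the unwanted values.
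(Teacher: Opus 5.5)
\textbf{There is a gap: the normalization step, which is the whole content of the proposition, is only announced, not proved.}

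Your outline matches the paper's: compute the base points of $\varphi_2$, move the base points of $\varphi$ onto them by an automorphism, and conclude because the homaloidal net is the complete system of quartics with those base points. But you never show that the stabilizer $\{[x+ay+cz:by+dz:ez]\}$ of $p_1,p_2$ acts transitively on the admissible data $(s,t_5,t_6,t_7)\in\K^*\times\K^3$. You only ``expect'' that there is no modulus.

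The dimension count ($5$-dimensional group, $4$-dimensional configuration space) does not give transitivity. It is equally compatible with several orbits. That is exactly what happens for graphs \eqref{graph1.1} and \eqref{graph4.1}, where two inequivalent normal forms $k=0,1$ occur.

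Your fallback cannot help either. For graph \eqref{graph2.1} the enriched data are automatically empty. By the proximity inequality no line or conic passes through the satellite point $p_3$, and every base point other than $p_1,p_2$ is $p_3$ or infinitely near it. So Remark~\ref{rem:line} imposes nothing, and a genuine modulus would simply make the proposition false.

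You also stop computing the base points of $\varphi_2$ at $p_3$. The remaining ones have $t_4=-1$ and $t_5=t_6=t_7=0$. These are the successive points of the cuspidal cubic $xz^2+y^3=0$, whose products with the lines $ay+bz=0$ lie in the net.

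\textbf{How the paper closes the gap.} It uses that cubic.
\begin{enumerate}
\item By \cite[Lemma 9.1]{CN} one may assume $q_i=p_i$ for $i\le 6$.
\item The automorphisms fixing $p_1,\dots,p_6$ are $[x+cz:by:\pm b^{3/2}z]$.
\item If $q_7=(p_1,0,\infty,-1,0,0,s)$, then $q_7$ lies on the cuspidal cubic $xz^2+y^3+sz^3=0$ through $p_1,\dots,p_6$. The map $[x+sz:y:z]$ sends this cubic onto $xz^2+y^3=0$, hence sends $q_7$ to $p_7$.
\end{enumerate}

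\textbf{How to complete your direct route instead.} You must actually do the triangular computation.
\begin{enumerate}
\item The diagonal part rescales $s$, and the other parameters do not affect it. So you may take $s=-1$, which then forces $e^2=b^3$.
\item The map $[x:y+dz:z]$ fixes $p_1,\dots,p_4$ and translates $t_5$ by $\pm3d$.
\item The map $[x+ay:y:z]$ fixes $p_1,\dots,p_5$ and translates $t_6$ by $\pm a$.
\item The map $[x+cz:y:z]$ fixes $p_1,\dots,p_6$ and translates $t_7$ by $\pm c$.
\end{enumerate}
The nonvanishing of these translations is what proves transitivity, and it is precisely what your proposal leaves unchecked.
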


\begin{proof}
Notice that $\varphi_2$ has the following base points:
\begin{itemize}
    \item $p_1$ of multiplicity 3 at $[1:0:0]$;
    \item $p_2\infnear_1 p_1$ with standard coordinates $(p_1,0)$, namely in the direction of the line $z=0$;
    \item $p_3 \infnear_1 p_2$ is proximate to $p_1$, so that it has standard coordinates $(p_1,0,\infty)$;
    \item $p_4\infnear_1 p_3$ has standard coordinates $(p_1,0,\infty,-1)$;
    \item $p_5\infnear_1 p_4$ has standard coordinates $(p_1,0,\infty,-1,0)$;
    \item $p_6\infnear_1 p_5$ has standard coordinates $(p_1,0,\infty,-1,0,0)$;
    \item $p_7\infnear_1 p_6$ has standard coordinates $(p_1,0,\infty,-1,0,0,0)$.
\end{itemize}
It suffices to show that, if $\varphi$ has enriched weighted proximity graph \eqref{graph2.1}, then there exists an automorphism of $\PP^2$ mapping the base points $q_1,\ldots,q_7$ of $\varphi$ to the points with the above standard coordinates.

In \cite[Lemma 9.1]{CN} we proved that there exists an automorphism of $\PP^2$ mapping $q_1,\ldots,q_6$ to $p_1,\ldots,p_6$ as above.
Furthermore, one may compute that automorphisms $\alpha$ of $\PP^2$, that fix $p_1,\ldots,p_6$, are such that
\[
\alpha^{-1}([x:y:z]) = [x + az : by : \pm b^{3/2} z]
\]
for some $a,b\in\K$, $b\ne0$.
A cubic curve passing through $p_1, \ldots, p_6$ and $q_7$ with standard coordinates $(p_1,0,\infty,-1,0,0,s)$, where $s\in\K$, is sent to the cubic curve $xz^2+y^3$ by the automorphism $\beta$ of $\PP^2$ such that
\[
\beta^{-1}([x:y:z]) = [x \mp s b^{3/2}z : by : \pm b^{3/2} z]
\]
for some $b\in\K$ and $b\neq 0$.
We conclude that, up to automorphism of $\PP^2$, we may assume that $p_7$ has standard coordinates $(p_1,0,\infty,-1,0,0,0)$.
\end{proof}

\begin{proposition}
Let $\varphi$ be a plane Cremona map with enriched weighted proximity graph \eqref{graph4.1}. Then $\varphi$ is equivalent to the map 
\begin{equation}\label{phi4.1}
\varphi_3([x:y:z]) = [(y + kz)(xz^2 + y^3) : yz^3 : z^4],
\end{equation}
where $k$ is either $0$ or $1$.
\end{proposition}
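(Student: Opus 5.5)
I will follow the same strategy as in the two previous propositions. First I compute the base points of $\varphi_3$ in standard coordinates and check that their enriched weighted proximity graph is \eqref{graph4.1}. Then I show that the base points $q_1,\ldots,q_7$ of any $\varphi$ with graph \eqref{graph4.1} can be moved by an automorphism of $\PP^2$ onto the base points of $\varphi_3$ for $k=0$ or $k=1$. Two Cremona maps with the same homaloidal linear system differ by an automorphism of the target, so this gives the equivalence.

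For $\varphi_3$, the linear system is spanned by $(y+kz)(xz^2+y^3)$, $yz^3$, $z^4$. The point $p_1=[1:0:0]$ has multiplicity 3. In the chart $x=1$ the cubic $z^2+y^3$ has a cusp at $p_1$ with tangent $z=0$. Thus the two base points infinitely near $p_1$ of first order should be:
\begin{itemize}
\item $p_2$, the direction of the line $y+kz=0$, with standard coordinates $(p_1,-k)$ or $(p_1,\infty)$ depending on the chart conventions;
\item $p_3=(p_1,0)$, the tangent direction $z=0$ of the cusp.
\end{itemize}
Then $p_4\infnear_1 p_3$ is the satellite point $(p_1,0,\infty)$, proximate to $p_1$. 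The points $p_5,p_6,p_7$ follow the cusp $xz^2+y^3=0$, i.e.\ $(p_1,0,\infty,-1,0,0)$ exactly as in the proof for $\varphi_2$. One verifies directly that a general member of the system passes through these points with multiplicity one. The condition $k=0$ versus $k\neq0$ distinguishes whether the line through $p_1$ in direction $p_2$ is special with respect to the $z$-coordinate (for $k=0$ it is the line $y=0$, which also passes through $p_3$'s chain data). This should be the invariant separating the two classes, since $p_2$ and $p_3$ are otherwise symmetric only up to this choice.

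For the normalization, I proceed in steps, each time describing the stabilizer of the points already fixed.
\begin{itemize}
\item Send $q_1$ to $[1:0:0]$ and the chain $q_3,q_4$ to $(p_1,0)$, $(p_1,0,\infty)$, where $q_4$ is the point proximate to both $q_3$ and $q_1$.
\item Apply \cite[Lemma 9.1]{CN} to the six points of the chain $q_1,q_3,\ldots,q_7$, as in the proof of the previous proposition. This places them on the cuspidal cubic $xz^2+y^3=0$, i.e.\ at $p_1,p_3,\ldots,p_7$. The seventh-order freedom handled there via $\beta$ is here absorbed, because the chain has only five points beyond $p_1$.
\item The residual automorphisms fixing these six points are $[x+az:by:\pm b^{3/2}z]$.
\item Move $q_2$, a point in the first neighbourhood of $p_1$ other than $p_3$, i.e.\ the direction of a line $y+cz=0$ through $p_1$.
\end{itemize}
Under $[x+az:by:\pm b^{3/2}z]$ this direction transforms by $c\mapsto \pm c\,b^{1/2}$ (up to the precise exponent), which should be computed carefully. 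So if $c\neq0$ we can choose $b$ to get $c=1$, and if $c=0$ it stays $0$. This yields $k\in\{0,1\}$, and the two cases are not equivalent since $c=0$ is preserved.

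The main obstacle is the bookkeeping of standard coordinates. I must confirm that the stabilizer of $p_1,p_3,\ldots,p_7$ still acts on the remaining direction with a nontrivial scaling, i.e.\ that the exponent of $b$ acting on $c$ is nonzero. I also must check that $q_2$ cannot coincide with the direction of the line $x$-axis $z=0$ (which is $p_3$) nor create a line through $p_1$ and two simple points, which Remark \ref{rem:line} forbids. I would first verify the action explicitly on the affine chart $x=1$ with coordinates $(y,z)$.
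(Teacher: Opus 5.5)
Your proposal is correct and follows the paper's proof essentially step by step. You normalize the six-point chain $q_1,q_3,\ldots,q_7$ via \cite[Lemma 9.1]{CN}, then use the residual stabilizer $[x+az:by:\pm b^{3/2}z]$, which indeed rescales the slope $c$ of the direction $y+cz=0$ of $q_2$ by $\pm b^{1/2}$, to reach $y=0$ ($k=0$) or $y+z=0$ ($k=1$). There are two harmless slips: the direction $y+kz=0$ has standard coordinates $(p_1,-1/k)$ for $k\neq0$, not $(p_1,-k)$, and the line $y=0$ does not pass through any point of the chain; what actually separates the two cases is your orbit argument ($c=0$ versus $c\neq0$).
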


\begin{proof}
Notice that $\varphi_3$ has the following base points:
\begin{itemize}
    \item $p_1$ of multiplicity 3 at $[1:0:0]$;
    \item $p_2\infnear_1 p_1$ with standard coordinates either $(p_1,-1)$ if $k=1$ (i.e.\ in the direction of the line $y+z=0$) or $(p_1,\infty)$ if $k=0$ (i.e.\ in the direction of the line $y=0$);
    \item $p_3 \infnear_1 p_1$  with standard coordinates $(p_1,0)$, namely in the direction of the line $z=0$;
    \item $p_4\infnear_1 p_3$ is proximate to $p_1$, so it has standard coordinates $(p_1,0,\infty)$;
    \item $p_5\infnear_1 p_4$ has standard coordinates $(p_1,0,\infty,-1)$;
    \item $p_6\infnear_1 p_5$ has standard coordinates $(p_1,0,\infty,-1,0)$;
    \item $p_7\infnear_1 p_6$ has standard coordinates $(p_1,0,\infty,-1,0,0)$.
\end{itemize}
It suffices to show that, if $\varphi$ has enriched weighted proximity graph \eqref{graph4.1}, then there exists an automorphism of $\PP^2$ mapping the base points $q_1,\ldots,q_7$ of $\varphi$ to the points with the above standard coordinates.

As in the proof of the previous proposition, we know that automorphisms $\alpha$ of $\PP^2$, that fix $p_1, p_3, p_4, p_5, p_6, p_7$, are such that
\[
\alpha^{-1}([x:y:z]) = [x + az : by : \pm b^{3/2} z]
\]
for some $a,b\in\K$, $b\ne0$.
If $q_2 \infnear_1 p_1$ has standard coordinates $(p_1,s)$, $s\in\K^*$, namely $q_2$ is in the direction of the line $sy-z=0$, then the line $sy-z=0$ is sent to the line $y+z=0$ by the automorphism $\beta$ of $\PP^2$ such that
\[
\beta^{-1}([x:y:z]) = [x+az : s^2 y : \pm s^3 z]
\]
for some $a\in\K$.
We conclude that, up to automorphism of $\PP^2$, we may assume that $p_2$ has standard coordinates $(p_1,-1)$.

Note that, one can check that there is no automorphism that fixes $p_1,p_3,p_4,p_5,p_6,p_7$ and sends the line $y=0$ to the line $y+z=0$, in other word, there is no automorphism that fixes $p_1,p_3,p_4,p_5,p_6,p_7$ and sends the point who has standard coordinate $(p_1,\infty)$ to the point with standard coordinate $(p_1,-1)$.
\end{proof}

It is easy to find explicit decompositions of the previous maps.

\begin{example}\label{phi1.1_decom}
A decomposition of \eqref{phi1.1} into $5$ quadratic maps is
\begin{align*}
\varphi_1 &=[8k(z-y) + x - 8y + 6z: -8y + 4z: -8y] \circ \textcolor{red}{\rho}  \circ[2k(z-2y) - x - 2y + z: 2y - z: y]  \,\circ\,\\
&\phantom{ = } \circ \textcolor{red}{\rho} \circ[-x - 2z: -2x + 2y: x]  \circ \textcolor{red}{\tau}  \circ [z: y + 2z: -x + z + y]  \circ \textcolor{red}{\rho}  \,\circ\,\\
&\phantom{ = } \circ [x + 2z - y: z: y - z] \circ \textcolor{red}{\rho}  \circ [x - y: z: y]
\end{align*}
where $k$ is either $0$ or $1$.
\end{example}

\begin{example}\label{phi2.1_decom}
A decomposition of \eqref{phi2.1} into $4$ quadratic maps is
\begin{align*}
\varphi_2 &= [x : z : y] \circ \textcolor{red}{\rho}  \circ
[z : x : y]  \circ \textcolor{red}{\rho}  \circ
[z : y : x]  \circ \textcolor{red}{\tau}  \circ
[z : y : -x + z]  \circ \textcolor{red}{\rho}  \circ
[x + y : z: y].
\end{align*}
\end{example}

\begin{example}
A decomposition of \eqref{phi4.1} into $4$ quadratic maps is
\begin{align*}
\varphi_3 &= [-kz + x: -ky - z: y] \circ \textcolor{red}{\rho}  \circ
[ky + x - y + z: -ky - y + z: y]  \circ \textcolor{red}{\rho}  \,\circ\, \\
&\phantom{ = } \circ [y + z: x - y: x]  \circ \textcolor{red}{\tau} \circ [-z: y - z: -x]  \circ \textcolor{red}{\rho}  \circ[-x: -z: y - z]
\end{align*}
where $k$ is either $0$ or $1$.
\end{example}

Next, we will find the quadratic lengths of all such maps.

\begin{lemma}
The quadratic length of $\varphi_2$ is $\q(\varphi_2) =4$. 
\end{lemma}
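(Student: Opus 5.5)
Upper bound: Example \ref{phi2.1_decom} gives an explicit factorization of $\varphi_2$ into four quadratic maps, so $\q(\varphi_2)\le 4$. The content of the lemma is therefore the lower bound $\q(\varphi_2)\ge 4$. Lemma \ref{cor5.14} with $d=4$ already gives $\q(\varphi_2)\ge 3$, so it suffices to rule out a decomposition $\varphi_2=\psi_3\circ\psi_2\circ\psi_1$ into three quadratic maps.

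Suppose such a decomposition exists. Then $\varphi_2\circ\psi_1^{-1}=\psi_3\circ\psi_2$ has quadratic length at most $2$, hence degree at most $4$. The base points of $\psi_1^{-1}$ are three points $q_1,q_2,q_3$ in $\BB(\PP^2)$. After composing with an automorphism we may take $\psi_1^{-1}$ involutory, by Lemma \ref{involutory}, so that Proposition \ref{PropMAC118b} applies. With $\varepsilon=4-m_1-m_2-m_3$, where the $m_i$ are the multiplicities of $\varphi_2$ at the $q_i$, the degree of $\varphi_2\circ\psi_1^{-1}$ is $4+\varepsilon$.

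The aim is to show that a product of two quadratic maps cannot have the resulting degree and base-point configuration.
\begin{itemize}
\item Since the base points of $\varphi_2$ have multiplicities $3,1^6$ and $q_1,q_2,q_3$ must form an admissible base locus of a quadratic map, the degree drops to $3$ only when the $q_i$ are $p_1$ together with two points among $p_2,\dots,p_7$ satisfying the configuration conditions of Lemma \ref{involutory}.
\item If the degree stays at $4$ or more, the composite would be a degree-$4$ map of quadratic length $\le 2$. By Lemma \ref{cor5.14} it could not be de Jonqui\`eres, so it would have type $(4;2^3,1^3)$. But Proposition \ref{PropMAC118b} preserves the multiplicity-$3$ point unless it lies among the $q_i$, a contradiction. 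The degree-$\ge 5$ case is excluded by the same lemma.
\end{itemize}

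So $q_1=p_1$ and $\{q_2,q_3\}\subset\{p_2,\dots,p_7\}$, with the three points admissible. Every $p_j$ is infinitely near $p_1$, so case (3) of Lemma \ref{involutory} must hold, i.e.\ $q_3\infnear_1 q_2\infnear_1 p_1$ with $q_3\notsatel p_1$. The only candidates are $q_2=p_2$ and $q_3=p_3$, and $p_3$ is proximate (satellite) to $p_1$ in graph \eqref{graph2.1}. This contradicts Lemma \ref{involutory}(3).

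The remaining possibility is the degenerate one: the $q_i$ include base points of $\varphi_2$ with $m_1+m_2+m_3=4$ but do not yield degree $3$. I expect the main obstacle to be a careful enumeration of these cases. The cleanest route is the principle that $\varphi_2\circ\psi_1^{-1}$ has degree $3$ and quadratic length $2$, hence is not equivalent to \eqref{phi1} by Theorem \ref{thm3.1}. Applying Proposition \ref{PropMAC118b} shows that every degree-$3$ composite arises from the unique admissible triple, which we just excluded. Therefore no decomposition into three quadratic maps exists, and $\q(\varphi_2)=4$.
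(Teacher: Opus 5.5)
Your argument is correct, but it takes a different route from the paper. The paper passes to $\varphi_2^{-1}$ (Remark \ref{rem:inverse}). There the triple point $p_1$, the proper simple point $p_2$ and $p_3\infnear_1 p_2$ do support a degree-lowering quadratic map. The paper therefore has to identify $\varphi_2^{-1}\circ\rho_1^{-1}$ as the exceptional cubic \eqref{phi1} and invoke Theorem \ref{thm3.1}. Its upper bound comes from Lemma \ref{lem:5.4}, which also rests on Theorem \ref{thm3.1}.

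You stay with $\varphi_2$ itself. There the only candidate triple $p_1,p_2,p_3$ is blocked by $p_3\satel p_1$, so no quadratic map lowers the degree. Hence $\varepsilon=-1$ is impossible, $\varepsilon=0$ leaves the homaloidal type $(4;3,1^6)$ unchanged, and $\varepsilon\ge 1$ gives degree at least $5$. Lemma \ref{cor5.14} alone then rules out every case. This is more elementary: it needs no inverse map, no enriched graph of $\varphi_2^{-1}$, and no classification of cubics. It is the same mechanism the paper uses in the first step of the proof for $\varphi_1$, and it would also give $\q(\varphi_3)\ge 4$ directly. The price is that your upper bound depends on the explicit factorization in Example \ref{phi2.1_decom}.

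Two points need tidying.
\begin{enumerate}
\item In your second bullet, the triple point does not survive because $p_1$ avoids the $q_i$. In the degree-$4$ case $p_1$ must be one of the $q_i$, since otherwise $\varepsilon\ge 1$. It keeps multiplicity $3+\varepsilon=3$ because $\varepsilon=0$.
\item Your final paragraph should be deleted. Since $m_1+m_2+m_3\le 5$, the cases $\varepsilon=-1$, $\varepsilon=0$ and $\varepsilon\ge 1$ are exhaustive, and you have already handled all of them. So there is no degenerate leftover, there is no admissible triple at all (let alone a unique one), and Theorem \ref{thm3.1} is never needed.
\end{enumerate}
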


\begin{proof}
Since $\q(\varphi_2)=\q(\varphi^{-1}_2)$ according to Remark \ref{rem:inverse}, it is sufficient to prove that $\q(\varphi^{-1}_2) = 4$. Indeed, the inverse map $\varphi^{-1}_2$ is the following map
\begin{equation*}\label{phi2.1_inver}
\varphi_2^{-1}([x:y:z]) = [-x^{3} z +y^{4}: x \,y^{2} z: z \,y^{3}]
\end{equation*}
and it has enriched weighted proximity graph
\[
\scalebox{0.5}{%
\begin{tikzpicture}[->,>=stealth',shorten >=2pt,auto,node distance=2.5cm,ultra thick]
  \tikzstyle{every state}=[fill=white,draw=black,text=black]
  \node[state,draw=red,text=red] (1)  {\huge 3};
  \node[state,draw=red,text=red] (2) [right of=1] {\huge 1};
  \node[state] (3) [right of=2] {\huge 1};
  \node[state] (4) [right of=3] {\huge 1};
  \node[state] (5) [right of=4] {\huge 1};
  \node[state] (6) [right of=5] {\huge 1};
  \node[state] (7) [right of=6] {\huge 1};
\path (7) edge (6);
\path (6) edge (5);
\path (5) edge (4);
\path (4) edge (3);
\path (3) edge (2);
\path[-,dotted,blue] (5) edge [bend right] (4);
\path[-,dotted,blue] (4) edge [bend right] (3);
\path[-,dotted,blue] (3) edge [bend right] (2);
\addvmargin{1mm}
\end{tikzpicture}%
}
\]
where the blue dotted line means that the corresponding base points $p_2, p_3, p_4, p_5$ are collinear. By Lemma \ref{lem5.15} and Lemma \ref{lem:5.4}, we have $$3\leq \q(\varphi^{-1}_2) \leq 4.$$

Suppose by contradiction that $\q(\varphi _{2}^{-1}) = 3$. Then, there should exist a quadratic map $\rho_1$ such that $\q(\varphi _{2}^{-1} \circ \rho_1^{-1}) = 2$. Note that, $\rho_1$ must be based at $p_1,p_2$ and $p_3$. Otherwise, by Proposition \ref{PropMAC118b}, one has $\deg(\varphi _{2}^{-1} \circ \rho_1^{-1}) \geq 4$. If $\deg(\varphi _{2}^{-1} \circ \rho_1^{-1}) \geq 5$, a contradition with Corollary \ref{cor5.14}. On the other hand, $\deg(\varphi _{2}^{-1} \circ \rho_1^{-1}) =4$ if and only if $\rho_1$ is based at $p_1,p_2$ and a general point, in this case the composition map $\varphi _{2}^{-1} \circ \rho_1^{-1}$ is a de \Jonq\ map, a contradition with Lemma \ref{lem5.15}. However, with $\rho_1$ having base points at $p_1,p_2$ and $p_3$, the composition map $\varphi _{2}^{-1} \circ \rho_1^{-1}$ is a plane Cremona map of degree 3 of type 1 in our classification in \cite{CN}, which has been shown in Theorem \ref{thm3.1} that its quadratic length is 3, a contradition.
\end{proof}
As a consequence, we can deduce that
\begin{corollary}
 Let $\varphi$ be a plane Cremona map with enriched weighted proximity graph \eqref{graph2.1}. Then, one has $\q(\varphi)=4$.
\end{corollary}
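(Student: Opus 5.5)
This follows from the two results just established. By the Proposition classifying maps with enriched weighted proximity graph \eqref{graph2.1}, such a $\varphi$ is equivalent to $\varphi_2$. That is, there exist $\alpha,\alpha'\in\Aut(\PP^2)$ with $\varphi=\alpha'\circ\varphi_2\circ\alpha$.

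Equivalent maps have the same quadratic length, as noted right after the definition of $\q$. Indeed, given a decomposition $\varphi_2=\psi_n\circ\cdots\circ\psi_1$ into quadratic maps, replacing $\psi_1$ by $\psi_1\circ\alpha$ and $\psi_n$ by $\alpha'\circ\psi_n$ gives a decomposition of $\varphi$. These replaced maps are still quadratic, since composing with automorphisms does not change the degree. The converse direction works the same way, using $\alpha^{-1}$ and $\alpha'^{-1}$.

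Hence $\q(\varphi)=\q(\varphi_2)$, and the preceding Lemma gives $\q(\varphi_2)=4$, so $\q(\varphi)=4$. There is no real obstacle here: all the substantive work, namely the normal form and the exclusion of quadratic length $3$ via Theorem \ref{thm3.1}, was done in the preceding Proposition and Lemma.
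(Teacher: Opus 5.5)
Your argument is correct and is exactly the paper's: the corollary is stated there as an immediate consequence of three ingredients. These are the Proposition showing that every map with enriched weighted proximity graph \eqref{graph2.1} is equivalent to $\varphi_2$, the Lemma $\q(\varphi_2)=4$, and the invariance of quadratic length under equivalence. Your explicit check of that invariance (absorbing $\alpha$ into $\psi_1$ and $\alpha'$ into $\psi_n$, which also covers $n=1$) simply fills in a step the paper asserts without proof.
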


The proof of the next lemma is exactly the same. Accordingly, we also prove that the corresponding inverse map has quadratic length of 4.
\begin{lemma}
The quadratic length of $\varphi_3$ is $\q(\varphi_3) =4$. 
\end{lemma}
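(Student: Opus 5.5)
The upper bound is already in hand: the explicit decomposition of $\varphi_3$ into four quadratic maps given above shows $\q(\varphi_3)\le 4$. For the lower bound, Lemma \ref{lem5.15} gives $\q(\varphi_3)\ge 3$. So it remains to exclude $\q(\varphi_3)=3$, which I will do by following the proof given for $\varphi_2$.

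As in that case, I will pass to the inverse, since $\q(\varphi_3)=\q(\varphi_3^{-1})$ by Remark \ref{rem:inverse}. First I compute $\varphi_3^{-1}$ explicitly, separately for $k=0$ and $k=1$. Then I determine its enriched weighted proximity graph, with the aim of showing that $\varphi_3^{-1}$ has a proper base point of multiplicity $1$, call it $p_2$, besides the triple point $p_1$. I expect the graph to look like that of $\varphi_2^{-1}$: a proper simple point followed by a chain of infinitely near simple points, with some collinearities. With such a point available, Lemma \ref{lem:5.4} re-proves the bound $\le 4$, and it also fixes the geometry used in the next step.

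Now suppose $\q(\varphi_3^{-1})=3$. Then there is a quadratic map $\rho_1$ with $\q(\varphi_3^{-1}\circ\rho_1^{-1})=2$. By Proposition \ref{PropMAC118b}, the degree of $\varphi_3^{-1}\circ\rho_1^{-1}$ is $4+\varepsilon$, where $\varepsilon=4-m_1-m_2-m_3$ and $m_1,m_2,m_3$ are the multiplicities of $\varphi_3^{-1}$ at the base points of $\rho_1$. I distinguish three cases.
\begin{itemize}
\item If the degree is at least $5$, then $\q\ge 3$ by Lemma \ref{lem5.15}, a contradiction.
\item If the degree is $4$, then $\rho_1$ is based at the triple point and exactly one simple base point. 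The composition is again de Jonqui\`eres of degree 4, so $\q\ge 3$, again a contradiction.
\item If the degree is $3$, then $\rho_1$ is based at $p_1$ and two simple base points. The composition $\varphi_3^{-1}\circ\rho_1^{-1}$ is a cubic, and I must show it is equivalent to the map \eqref{phi1} of Theorem \ref{thm3.1}, so that its quadratic length is $3$, a contradiction.
\end{itemize}

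The main obstacle is the third case. The base points of $\rho_1$ must be $p_1$ together with two simple points that admit a quadratic map, as in Lemma \ref{involutory}. Given the chain structure, I expect only one admissible choice, namely $p_1,p_2,p_3$ with $p_3\infnear_1 p_2$. If there turn out to be several choices, I will treat each one separately.

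For each admissible choice, I will compute the composition using the involutory map $\rho$ or $\sigma$ conjugated appropriately. Then I will read off its proximity graph and show that it is the graph of type 1 in \cite{CN}, which is the class of \eqref{phi1}. I will check this for both $k=0$ and $k=1$.
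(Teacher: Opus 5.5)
Your plan follows the paper's argument. You pass to $\varphi_3^{-1}$, get $3\le\q(\varphi_3^{-1})\le 4$, and exclude the value $3$ by showing the first quadratic map must lower the degree to $3$, with the resulting cubic of the type of \eqref{phi1}, hence of quadratic length $3$ by Theorem \ref{thm3.1}.

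Your guess about the graph is wrong, though, and the guessed triple does not exist. For $\varphi_3^{-1}$ the proper simple point $p_7$ is isolated. The other five simple points form a chain $p_6\infnear_1 p_5\infnear_1 p_4\infnear_1 p_3\infnear_1 p_2\infnear_1 p_1$ over the triple point, with $p_3,p_4$ proximate to $p_1$. So the only admissible triple is $\{p_1,p_2,p_7\}$; the chain triple $p_1,p_2,p_3$ is excluded because $p_3\satel p_1$. Composing with that quadratic map gives a double point followed by a chain of four simple points, the second of which is satellite to the double point. This is exactly the graph of \eqref{phi1}, as the paper asserts.
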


\begin{proof}
The inverse map $\varphi^{-1}_3$ of the map $\varphi_3$ is the following map
\begin{equation*}\label{phi4.1_inver}
\varphi_3^{-1}([x:y:z]) = [-k \,y^{3} z +x \,z^{3}-y^{4}: y \,z^{2} \left(k z +y \right):  \left(k z +y \right) z^{3}]
\end{equation*}
where $k$ is either $0$ or $1$, and it has enriched weighted proximity graph
\[
\scalebox{0.5}{%
\begin{tikzpicture}[->,>=stealth',shorten >=2pt,auto,node distance=2.5cm,ultra thick]
  \tikzstyle{every state}=[fill=white,draw=black,text=black]
  \node[state,draw=red,text=red] (1)  {\huge 3};
  \node[state] (2) [right of=1] {\huge 1};
  \node[state] (3) [right of=2] {\huge 1};
  \node[state] (4) [right of=3] {\huge 1};
  \node[state] (5) [right of=4] {\huge 1};
  \node[state] (6) [right of=5] {\huge 1};
  \node[state,draw=red,text=red] (7) [right of=6] {\huge 1};
\path (6) edge (5);
\path (5) edge (4);
\path (4) edge (3);
\path (3) edge (2);
\path (2) edge (1);
\path (4) edge [bend right] (1);
\path (3) edge [bend right] (1);
\addvmargin{1mm}
\end{tikzpicture}%
}
\]
where the base points from left to right respectively are $p_1,p_2,p_3,p_4,p_5,p_6$ and $p_7$.  By Lemma \ref{lem5.15} and Lemma \ref{lem:5.4}, we have $$3\leq \q(\varphi^{-1}_3) \leq 4.$$

Suppose by contradiction that $\q(\varphi _{3}^{-1}) = 3$. Then, there should exist a quadratic map $\rho_1$ such that $\q(\varphi _{3}^{-1} \circ \rho_1^{-1}) = 2$. Note that, $\rho_1$ must be based at $p_1,p_2$ and $p_7$. Otherwise, by Proposition \ref{PropMAC118b}, one has $\deg(\varphi _{3}^{-1} \circ \rho_1^{-1}) \geq 4$. If $\deg(\varphi _{3}^{-1} \circ \rho_1^{-1}) \geq 5$, a contradition with Corollary \ref{cor5.14}. On the other hand, $\deg(\varphi _{3}^{-1} \circ \rho_1^{-1}) =4$ if and only if $\rho_1$ is based at $p_1,p_i$ (where $i$ is either $2$ or $7$) and a general point, in this case the composition map $\varphi _{3}^{-1} \circ \rho_1^{-1}$ is a de \Jonq\ map, a contradition with Lemma \ref{lem5.15}. However, with $\rho_1$ having base points at $p_1,p_2$ and $p_7$, the composition map $\varphi _{3}^{-1} \circ \rho_1^{-1}$ is a cubic map of type 1 in our classification in \cite{CN}, which has been shown in Theorem \ref{thm3.1} that its quadratic length is 3, a contradition.
\end{proof}

\begin{corollary}\label{corT4.1}
Let $\varphi$ be a plane Cremona map with enriched weighted proximity graph \eqref{graph4.1}. Then, one has $\q(\varphi)=4$.
\end{corollary}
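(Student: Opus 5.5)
Since the corollary concerns any $\varphi$ whose enriched weighted proximity graph is \eqref{graph4.1}, I would first invoke the proposition classifying such maps: $\varphi$ is equivalent to $\varphi_3$ for $k=0$ or $k=1$. That is, $\varphi=\alpha'\circ\varphi_3\circ\alpha$ for suitable $\alpha,\alpha'\in\Aut(\PP^2)$. Composing with automorphisms does not change the number of quadratic factors in a decomposition, since linear factors can be absorbed into the adjacent quadratic maps. Hence equivalent maps have the same quadratic length, as recalled in Section \ref{sec:plane_Cremona_maps}, and so $\q(\varphi)=\q(\varphi_3)$.

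The second step is to apply the preceding lemma, which gives $\q(\varphi_3)=4$ for both values of $k$. For completeness I would recall how that lemma goes. First, $\q(\varphi_3)=\q(\varphi_3^{-1})$ by Remark \ref{rem:inverse}. The inverse $\varphi_3^{-1}$ has a proper simple base point $p_7$, so Lemma \ref{lem:5.4} gives the upper bound $4$, and Lemma \ref{lem5.15} gives the lower bound $3$. The value $3$ is ruled out by considering a first quadratic factor $\rho_1$ in a putative decomposition of length $3$. If $\rho_1$ is not based at $p_1,p_2,p_7$, the composite either has degree at least $5$, or it is a de Jonquières map of degree $4$; in both cases Lemma \ref{lem5.15} gives quadratic length at least $3$. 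If $\rho_1$ is based at $p_1,p_2,p_7$, the composite is the cubic map of Theorem \ref{thm3.1}, whose quadratic length is $3$. Either way $\q(\varphi_3^{-1})\ge 4$.

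Combining the two steps gives $\q(\varphi)=4$. The only point needing care is the first step: I must make sure the classification proposition covers every map with graph \eqref{graph4.1}, including the case where $q_2$ has standard coordinates $(p_1,\infty)$, which corresponds to $k=0$. I must also check that the lemma's computation is valid uniformly in $k\in\{0,1\}$. Both checks are already carried out in the cited results, so there is no real obstacle.
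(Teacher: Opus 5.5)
Your proposal is correct and follows the paper's own route: the classification proposition makes $\varphi$ equivalent to $\varphi_3$ for $k\in\{0,1\}$, quadratic length is invariant under equivalence, and the preceding lemma gives $\q(\varphi_3)=4$. Your recap of that lemma also matches the paper's proof: it passes to $\varphi_3^{-1}$, takes the bounds $3\le\q\le4$ from Lemmas \ref{lem5.15} and \ref{lem:5.4}, and excludes $3$ by analysing the first quadratic factor $\rho_1$, the only cubic case being $\rho_1$ based at $p_1,p_2,p_7$.
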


\begin{lemma}
The quadratic length of $\varphi_{1}$ is $\q(\varphi_1)=5$.
\end{lemma}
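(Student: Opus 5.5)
The upper bound $\q(\varphi_1)\le 5$ is given by the explicit decomposition in Example \ref{phi1.1_decom}. For the lower bound, the plan is to argue by contradiction, assuming $\q(\varphi_1)\le 4$, and to peel off the first quadratic map, as in the proofs for $\varphi_2$ and $\varphi_3$. By Lemma \ref{cor5.14} we already have $\q(\varphi_1)\ge 3$. By Remark \ref{rem:inverse} we may equally work with $\varphi_1^{-1}$. The inverse is again de Jonquières of degree 4, and one checks directly, by inverting the triangular formula, that its graph is again of type \eqref{graph1.1}. So it suffices to show that no decomposition $\varphi_1=\psi\circ\rho_1$ exists with $\rho_1$ quadratic and $\q(\psi)\le 3$.

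Write $\varphi_1=\psi\circ\rho_1$ and choose $\rho_1$ involutory. By Proposition \ref{PropMAC118b}, $\psi=\varphi_1\circ\rho_1$ has degree $4+\varepsilon$, where $\varepsilon=4-m_1-m_2-m_3$ and $m_1,m_2,m_3$ are the multiplicities of $\varphi_1$ at the three base points of $\rho_1$. We then split according to these base points.

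\emph{Case $\deg\psi\ge 5$.} Here Lemma \ref{cor5.14} gives only $\q(\psi)\ge 3$. This is not enough when $\deg\psi=5$, so we need more. For $\deg\psi\ge 6$, use the bound $\q\ge\lceil\log_2 \deg\rceil$ (degree at most doubles under composition with a quadratic map) to get $\q(\psi)\ge 3$, and then argue that a composition of 3 quadratic maps of degree $\ge 5$ cannot arise. For $\deg\psi=5$, the three base points of $\rho_1$ carry total multiplicity 3, so they are $p_1$ together with two points not base points of $\varphi_1$. Then $\psi$ has type $(5;4,1^6,\dots)$, or $(5;3,2,2,1^6)$ if $p_1$ is not used. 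One must show $\q(\psi)\ge 4$. For the de Jonquières case this follows from Lemma \ref{cor5.14}. For the other types, one repeats the degree-drop argument: a length-3 decomposition of a quintic requires a quadratic map dropping it to degree 3 or 4 with $\q\le 2$, and this is impossible by the same multiplicity counting.

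\emph{Case $\deg\psi=4$.} The base points of $\rho_1$ are $p_1$, one simple base point, and one non-base point. The only simple base point admissible in such a triple is $p_2$, since a quadratic map based at $p_1$ and at a point infinitely near of order $\ge2$ without the intermediate point is impossible. The result $\psi$ is de Jonquières of degree 4 with a proper or first-order simple base point, and Lemma \ref{cor5.14} gives $\q(\psi)\ge 3$, a contradiction.

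\emph{Case $\deg\psi=3$.} Here $\rho_1$ must be based at three base points of $\varphi_1$ including $p_1$, forming an admissible triple by Lemma \ref{involutory}. The only candidates are $p_1,p_2,p_3$, and this triple is not admissible since $p_3\satel p_1$. This is exactly why $\varphi_1$ differs from $\varphi_2$ and $\varphi_3$. Any other triple contains two points infinitely near $p_1$ that are not in the required configuration, so this case cannot occur at all.

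The main obstacle is the degree-5 case. Lemma \ref{cor5.14} alone only gives $\q\ge 3$ there, so I would first try to show that every degree-5 map reachable this way satisfies $\q\ge 4$. The idea is to iterate the same analysis one step further: from degree 5, one quadratic step must reach a map with $\q\le 2$, hence of degree $\le 4$ and of length $\le 2$. Then check, via the explicit base-point configuration inherited from $\varphi_1$ (chain $p_2,\dots,p_7$ with $p_3,p_4\satel p_1$), that the points of high multiplicity needed for this drop never form an admissible triple.
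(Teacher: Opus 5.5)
\textbf{Gap in the case $\deg\psi=4$.} This is the case that carries the whole difficulty, and your argument does not close it. You reduced the statement to showing that no decomposition $\varphi_1=\psi\circ\rho_1$ has $\q(\psi)\le 3$. Here you deduce a contradiction from $\q(\psi)\ge 3$ (Lemma \ref{cor5.14}), but that is not a contradiction. The bound $\q(\psi)\ge 3$ only excludes $\q(\psi)\le 2$, so it only rules out $\q(\varphi_1)=3$.

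To exclude $\q(\varphi_1)=4$ you need $\q(\psi)\ge 4$ for every quartic de Jonqui\`eres map obtained from an involution based at $p_1$, $p_2$ and a non-base point $q$. Lemma \ref{cor5.14} cannot give this: in degree $4$ it is sharp, since a general quartic de Jonqui\`eres map has quadratic length $3$. The point $q$ can sit in two places, and each needs its own argument:
\begin{enumerate}
\item \emph{$q$ a proper point off the line $z=0$.} Then $\psi$ has graph \eqref{graph4.1}, and $\q(\psi)=4$ is Corollary \ref{corT4.1}. That corollary needs a separate argument: pass to $\varphi_3^{-1}$, show that the only degree-lowering quadratic map is based at $p_1,p_2,p_7$, and check that it produces a cubic equivalent to \eqref{phi1}, which has quadratic length $3$ by Theorem \ref{thm3.1}.
\item \emph{$q=(p_1,0,t)$ with $t\in\K^*$, in the first neighbourhood of $p_2$.} Take $\iota_t([x:y:z])=[ty^2-xz:yz:z^2]$, which is based at $p_1,p_2,(p_1,0,t)$. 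One computes $\varphi_1\circ\iota_t=[-xz^3+y^4+(t+k)y^2z^2:yz^3:z^4]$, which again has graph \eqref{graph1.1}.
\end{enumerate}
Because of the second case, you must first prove $\q\neq 3$ for every map with graph \eqref{graph1.1}. Your own case analysis does exactly this once the target is $\q(\psi)\le 2$. Only then can you exclude $\q=4$, using Corollary \ref{corT4.1} together with that first step. This is the two-step structure of the paper's proof.

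\textbf{Other cases.} Your degree-$8$ step is not an argument as written. The claim that \emph{a composition of $3$ quadratic maps of degree $\ge 5$ cannot arise} is false in general: three general quadratic maps compose to a map of degree $8$. What you need is specific to this map. If $\rho_1$ is not based at $p_1$, then $\psi$ has type $(8;4^3,3,1^6)$. A further quadratic map brings this to degree $\le 4$ only if it is based at the three $4$-fold points. The result is then a quartic de Jonqui\`eres map, so $\q\ge 3>2$, a contradiction.

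Conversely, the degree-$5$ case, which you single out as the main obstacle, is immediate. It occurs only when $\rho_1$ is based at $p_1$ and two non-base points, which gives type $(5;4,1^8)$, and Lemma \ref{cor5.14} yields $\q\ge 4$. Your alternative type $(5;3,2,2,1^6)$ does not satisfy the homaloidal equations. It also cannot occur, because every simple base point of $\varphi_1$ is infinitely near $p_1$.
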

\begin{proof}
By Example \ref{phi1.1_decom}, it implies that $\q(\varphi_{1}) \leq 5$. And by Lemma \ref{lem5.15}, we have 
$$3 \leq \q(\varphi_{1}) \leq 5.$$
To prove that $\q(\varphi_{1})=5$, it is sufficient to show that $\q(\varphi_{1})$ cannot equal 3 or 4.
\begin{itemize}
\item First of all, we prove that $\q(\varphi_{1})$ cannot be 3.\\
Indeed, suppose by contradiction that $\q(\varphi_{1}) = 3$. Then, there should exist a quadratic map $\rho_1$ such that $\q(\varphi_{1} \circ \rho_1^{-1}) = 2$. By Proposition \ref{PropMAC118b}, one has $\deg(\varphi_{1} \circ \rho_1^{-1}) \geq 4$. If $\deg(\varphi_{1} \circ \rho_1^{-1}) \geq 5$, a contradition with Corollary \ref{cor5.14}. Otherwise, $\deg(\varphi_{1} \circ \rho_1^{-1}) =4$ if and only if $\rho_1$ is based at $p_1,p_2$ and a general point, however in this case the composition map $\varphi _1 \circ \rho_1^{-1}$ is a de \Jonq\ map who has enriched weighted proximity graph \eqref{graph4.1}, a contradition with Corollary \ref{corT4.1}.
\item Now, suppose that $\q(\varphi_{1}) = 4$. Then, there should exist a quadratic map $\rho_1$ such that $\q(\varphi_{1} \circ \rho_1^{-1}) = 3$. In particular, $\rho_1$ must be based at $p_1$, otherwise by Proposition \ref{PropMAC118b}, one has $\deg(\varphi_{1} \circ \rho_1^{-1}) = 8$ and its enriched proximity graph with the corresponding base points is of the following form:
\begin{center}
\scalebox{0.5}{%
\begin{tikzpicture}[->,>=stealth',shorten >=2pt,auto,node distance=2.5cm,ultra thick,baseline=0]
  \tikzstyle{every state}=[fill=white,draw=black,text=black]

\node[state,draw=red,text=red, label=below: $p'_0$] (A)    {\huge 4};
  \node[state,draw=red,text=red] (B) [right of=A,label=below: $p'_1$] {\huge 4};
  \node[state,draw=red,text=red] (C) [right of=B,label=below: $p'_2$] {\huge 4};
 
  \node[state,draw=red,text=red] (D) [right of=C,label=below: $p'_3$]  {\huge 3};
  \node[state] (E) [right of=D,label=below: $p'_4$] {\huge 1};
  \node[state] (F) [right of=E,label=below: $p'_5$] {\huge 1};
  \node[state] (G) [right of=F,label=below: $p'_6$] {\huge 1};
  \node[state] (H) [right of=G,label=below: $p'_7$] {\huge 1};
  \node[state] (I) [right of=H,label=below: $p'_8$] {\huge 1};
  \node[state] (K) [right of=I,label=below: $p'_9$] {\huge 1};

\path (E) edge (D);
\path (F) edge (E);
\path (G) edge (F);
\path (H) edge (G);
\path (I) edge (H);
\path (K) edge (I);
\path (F) edge [bend right] (D);
\path (G) edge [bend right] (D);
\addvmargin{1mm}
\end{tikzpicture}%
}
\\
\end{center}
Then, there should exist a quadratic map $\rho_2$ such that $\q(\varphi_{1} \circ \rho_1^{-1}\circ \rho_2^{-1}) = 2$. However, by Proposition \ref{PropMAC118b}, one has $\deg(\varphi_{1} \circ \rho_1^{-1}\circ \rho_2^{-1}) \geq 4$. If $\deg(\varphi_{1} \circ \rho_1^{-1}\circ \rho_2^{-1}) \geq 5$, a contradition with Corollary \ref{cor5.14}. Otherwise, $\deg(\varphi_{1} \circ \rho_1^{-1}\circ \rho_2^{-1}) =4$ if and only if $\rho_2$ is based at $p'_0, p'_1$ and $p'_2$, moreover in this case the composition map $\varphi_{1} \circ \rho_1^{-1}\circ \rho_2^{-1}$ is a de \Jonq\ map who has enriched weighted proximity graph \eqref{graph1.1}, a contradiction with the hypothesis.

If $\rho_1$ is based at $p_1$ and is not based at $p_2$, then $\deg(\varphi_{1} \circ \rho_1^{-1}) = 5$ and it is a de \Jonq\ map,  a contradition with  Lemma \ref{lem5.15}.

Therefore, $\rho_1$ must be based at $p_1$ and $p_2$.  So the map $\varphi_{1} \circ \rho_1^{-1}$ is a quartic de \Jonq\ map who has enriched weighted proximity graph \eqref{graph4.1}, a contradition with Corollary \ref{corT4.1}.
\end{itemize}
\end{proof}

\begin{corollary}
Let $\varphi$ be a plane Cremona map with enriched weighted proximity graph \eqref{graph1.1}. Then, one has $\q(\varphi)=5$.
\end{corollary}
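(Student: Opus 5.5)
This corollary follows from the preceding Proposition and Lemma together with invariance of quadratic length under equivalence. My plan has two steps.

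First, let $\varphi$ have enriched weighted proximity graph \eqref{graph1.1}. By the Proposition classifying such maps, there exist automorphisms $\alpha,\alpha'\in\Aut(\PP^2)$ with $\varphi=\alpha'\circ\varphi_1\circ\alpha$. Here $\varphi_1$ is the map \eqref{phi1.1} with $k\in\{0,1\}$. In that proof the automorphism is constructed only on the source side, sending the base points of $\varphi$ to those of $\varphi_1$. The target-side automorphism is then supplied by the standard fact that two de Jonquières quartic maps with the same base points, counted with multiplicities, share the same homaloidal net. Hence they differ by post-composition with an element of $\Aut(\PP^2)$.

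Second, quadratic length is invariant under equivalence, as noted in Section \ref{sec:plane_Cremona_maps}. The reason is that any decomposition $\varphi_1=\psi_n\circ\cdots\circ\psi_1$ into quadratic maps yields a decomposition
\[
\varphi=(\alpha'\circ\psi_n)\circ\psi_{n-1}\circ\cdots\circ(\psi_1\circ\alpha).
\]
Each of $\alpha'\circ\psi_n$ and $\psi_1\circ\alpha$ is still quadratic, so $\q(\varphi)\le\q(\varphi_1)$. The same argument with $\alpha^{-1},\alpha'^{-1}$ gives the reverse inequality.

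Combining the two steps with the preceding Lemma, which states $\q(\varphi_1)=5$ for both values of $k$, gives $\q(\varphi)=5$.

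The only step needing care is making sure the previous Lemma really covers both $k=0$ and $k=1$. Its argument uses only the enriched weighted proximity graph, Proposition \ref{PropMAC118b}, Lemma \ref{lem5.15} and Corollary \ref{corT4.1}, so it is independent of $k$. The upper bound comes from Example \ref{phi1.1_decom}, which is written for both values of $k$. So no real obstacle remains; the corollary is a direct consequence.
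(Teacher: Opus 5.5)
Your argument is correct and is exactly how the paper obtains this corollary, which it states without a written proof. It combines the Proposition (any such $\varphi$ is equivalent to $\varphi_1$ with $k\in\{0,1\}$), the Lemma $\q(\varphi_1)=5$, and the invariance of quadratic length under equivalence noted in Section~\ref{sec:plane_Cremona_maps}; your remark that the target-side automorphism comes from the two maps sharing the same homaloidal net correctly fills in a detail the paper leaves implicit.
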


\subsection*{Acknowledgements}
This work was completed while the second author was a postdoctoral researcher at Vietnam Institute for Advanced Studies in Mathematics (VIASM) and Tokyo University of Science (TUS). She sincerely thanks VIASM and TUS for their very kind support and hospitality.
The first author is supported by the Departmental Research Funding [FIRD] from the Department of Mathematics and Computer Science of the University of Ferrara.
The second author is supported by Project for Mathematical Science and Cooperation with Engineering (MaSCE) at TUS.

\end{document}